\newcommand{\norm}[1]{\left\| #1 \right\|}
\DeclareMathAlphabet{\mathcal}{OMS}{cmsy}{m}{n}
\newtheorem{theorem}{Theorem}[section]
\newtheorem{corollary}{Corollary}[theorem]
\newtheorem{lemma}[theorem]{Lemma}
\author{Benjamin Jarman$^1$, Lara Kassab$^{1*}$, Deanna Needell$^1$,\and Alexander Sietsema$^1$}
\address{$^1$University of California, Los Angeles, 520 Portola Plaza, Los Angeles, CA 90025, USA}
\thanks{$^*$Corresponding author. Email: \href{mailto:lkassab@math.ucla.edu}{lkassab@math.ucla.edu}}
\title{Stochastic Iterative Methods for Online Rank Aggregation from Pairwise Comparisons}
\thanks{BJ and DN were partially supported by NSF DMS 2011140 and LK and DN were partially supported by Dunn Family Endowed Chair Fund.}
\begin{document}

\begin{abstract}
In this paper, we consider large-scale ranking problems where one is given a set of (possibly non-redundant) pairwise comparisons and the underlying ranking explained by those comparisons is desired. We show that stochastic gradient descent approaches can be leveraged to offer convergence to a solution that reveals the underlying ranking while requiring low-memory operations. We introduce several variations of this approach that offer a tradeoff in speed and convergence when the pairwise comparisons are noisy (i.e., some comparisons do not respect the underlying ranking). We prove theoretical results for convergence almost surely and study several regimes including those with full observations, partial observations, and noisy observations. Our empirical results give insights into the number of observations required as well as how much noise in those measurements can be tolerated.
%Include keywords and mathematical subject classification numbers as needed.
% \PACS{PACS code1 \and PACS code2 \and more}
\end{abstract}

\maketitle

\section{Introduction}
\label{sec:introduction}
We consider the problem of ranking a collection of $n$ objects using \textit{pairwise comparisons}, that is, information of the form `item $i$ is superior to item $j$'. This problem arises in a wide range of settings: for example, one may wish to rank a league of sports teams, with the available data being the outcomes of two-team matches. Alternatively, a retailer may wish to determine a ranking of their products by surveying customers on their pairwise preferences. Further examples include more general recommender systems \cite{AggRecommender}, determining individuals' perception of urban areas through pairwise street view comparisons \cite{Salesses2013TheCo}, and ranking students in massive online courses via peer grading \cite{Piech2013Tuned}. In all of these settings, the aim is to obtain the ranking using as few comparisons as possible, as there is usually some cost (computational, financial, or otherwise) associated with acquiring or using comparisons.

In each of these applications, the method in which comparisons are sequentially acquired is critical. We focus in this work on the \textit{passive} (or \textit{non-adaptive}) setting, in which the ranker has no control over which pair of items will be compared at any point. This case includes, for example, the setting in which the ranker is given a fixed set of $m$ comparisons, or alternatively an \textit{online} setting in which comparisons are sampled one-by-one and are assumed to be random. This is the case, for example, when ranking sports teams (as the matches to be played are predetermined). This setting is in contrast to the \textit{active} (or \textit{adaptive}) setting, in which the ranker may choose which pairs of objects to compare based on previous comparisons. This setting is common in applications where the ranker has control over data acquisition, for instance in the aforementioned ranking perception of urban areas, and was studied in depth in \cite{HeckelApprox}.

We focus in this work on the aforementioned online setting, and consider settings where the data may be massively large-scale, the comparisons may be non-redundant (each comparison may be observed only one or fewer times), and the comparison outcomes are not necessarily random. We show that a simple and computationally efficient stochastic gradient descent method (SGD), which by nature is highly scalable to the big data regime, can be leveraged to solve the ranking problem. We consider the Kaczmarz method, a particular variant of SGD, as well as other tailored approaches. We give theoretical results showing that our method converges in finite time almost surely, as well as bounding the expected number of iterations to reach convergence. A range of empirical results are also provided. We also consider the case in which comparisons are \textit{noisy}, that is, the reverse of the respective comparison in the true ranking. We also present one adaptation that is robust to this form of noise and provide empirical analyses.

\subsection{Contribution and Organization} 
\label{sec:contribution}
We begin in \cref{sec:background} with the problem formulation, background, and related work of the rank problem and stochastic gradient descent approaches. \cref{sec:howmany} also includes a discussion of how many observations are needed in various settings.  We present the Kaczmarz method approach, which is the classical Kaczmarz method for feasibility, but applied to the ranking problem, and theoretical guarantees in \cref{sec:main}. Our theoretical results show in the setting of full observations that the iterates converge linearly to the feasible region that explains the underlying ranking. We develop a variation of this approach in \cref{sec:noisy} to handle the setting in which some observed comparisons are inconsistent, i.e., they do not respect the underlying ranking. Although this can be considered as ``noise", this leads to multiplicative rather than additive noise. Finally, we showcase empirical results and investigate other step size choices and implementation details in \cref{sec:exps}. We view our work as complementary to previous work that relies on a randomized model for observations and often requires redundant observations (multiple comparisons for each given pair).  Most importantly, we highlight the mathematical behavior of well-known SGD methods for feasible region detection when that feasible region arises from pairwise comparison data, and when such a feasible solution yields an underlying consistent ranking.

\section{Problem Formulation and Background}
\label{sec:background}
Consider a collection of items $[n] = \{1,...,n\}$, where each item has an intrinsic score $x_i \in \mathbb{R}$. We define a \emph{ranking} of these items as a permutation $\pi : [n] \to [n]$ such that $x_{\pi(1)} \geq x_{\pi(2)} \geq ... \geq x_{\pi(n)}$. We then consider the problem of determining this ranking from pairwise comparisons, meaning observations of the form $x_i < x_j$ for some $i,j \in [n]$. We begin by assuming that these pairwise comparisons respect the underlying full ranking (i.e. observations are noiseless), and we seek to recover the full ranking exactly.

In the case of having some fixed number of comparisons $m$, the problem may be formulated as a system of linear inequalities: each pairwise comparison $x_i < x_j$ may be written $x_i - x_j \leq -\varepsilon$ for some $\varepsilon > 0$, and all such comparisons may be compiled into a system $Ax \leq -\varepsilon$, where $A \in \{0,\pm 1\}^{m \times n}$. We introduce $\varepsilon$ as slack to form a system of non-strict inequalities, and it may be chosen to be any positive value: we refer to \cref{sec:exps} for further discussion and implementation considerations. Note that the kernel of $A$ contains $\operatorname{span}\{(1,\dots, 1)\}$, thus the system is underdetermined, and, so long as the underlying graph with items as nodes and edges as comparisons between items is connected, any solution vector will yield the same ranking.  Note that the use of $\varepsilon$ here ensures that a solution to the system will give an unambiguous ranking (see \cref{sec:exps} for further discussion). As an example, upon solving the system

\begin{equation*}
    \begin{bmatrix}
    1 & -1 & 0 & 0 \\
    0 & -1 & 1 & 0 \\
    1 & 0 & -1 & 0 \\
    0 & 0 & -1 & 1 \\
    1 & 0 & 0 & -1 \\
    \end{bmatrix}
    \begin{bmatrix}
    x_1\\
    x_2\\
    x_3\\
    x_4\\
    \end{bmatrix}
    \leq
    -\varepsilon
\end{equation*}
one can deduce the rankings: $x_1 \leq x_4 \leq x_3 \leq x_2$. Note that the values assigned to the solution vector themselves don't carry any particular meaning, we simply find a solution in the feasible region corresponding to all points that would give the desired ranking.

In this work, we consider a general \textit{online} setting, where comparisons are received one-at-a-time and are viewed as being sampled from some distribution $\mathcal{D}$ on the complete set of ${n \choose 2}$ comparisons. We specialise to the case of comparisons being sampled uniformly at random (so that each particular comparison has probability $1/{n \choose 2}$ of being sampled at any particular iteration), but remark that extending our analysis to more general sampling distributions should be straightforward, and that we do not require measurements to be sampled more than once. Letting $Q$ be the matrix formed from every pairwise comparison in the manner described above, this is equivalent to sampling rows from the system $Qx \leq -\varepsilon$. To further the linear algebraic framework, we equivalently refer to comparisons as inequalities of the form $x_i < x_j$, and as vectors $\varphi \in \mathbb{R}^n$ with $i$\textsuperscript{th} entry equal to $1$ and $j$\textsuperscript{th} entry equal to $-1$, with all other entries equal to zero.  This linear system and row sampling duality precisely motivate our use of the Kaczmarz method as a solution, which we provide background for next.

\subsection{Background and Related Work}
\label{sec:relatedwork}
The Kaczmarz method \cite{Kaczmarz1937Angen} (later rediscovered for use in computerized tomography as the Algebraic Reconstruction Technique \cite{Herman1993Algebr}) is a popular iterative method for solving overdetermined consistent linear systems. It was also extended to linear feasibility problems in the classical paper \cite{agmon1954relaxation}.
The Kaczmarz method is a variant of stochastic gradient descent (SGD) with a particular choice of step size. Suppose $A \in \mathbb{R}^{m \times n}, b \in \mathbb{R}^n$ are such that $Ax = b$ is overdetermined with solution $x^\ast$. Then, an arbitrary initial iterate $x^0$ is projected sequentially onto the hyperplanes corresponding to rows of the system $Ax = b$, so that at the $t$\textsuperscript{th} iteration the update has the form 
\[
x^t = x^{t-1} - \frac{a_i^\top x^{t-1} - b_i}{\norm{a_i}^2}a_i,
\]
where $i = t \text{ mod $m$}$. Whilst convergence to $x^\ast$ is guaranteed via a simple application of the Pythagorean theorem, quantitative convergence guarantees proved elusive. In the landmark paper \cite{Strohmer2009Arand}, the authors proved a linear convergence guarantee when rows are selected at random according to a particular distribution. Namely, in their randomized Kaczmarz method, at iteration $t$ row $i$ is selected with probability $\norm{a_i}^2/\norm{A}_F^2$, and the update takes the same form as above. This row selection scheme gave rise to \cref{thm:rkconv}, which shows linear\footnote{Mathematicians sometimes refer to this rate as exponential as opposed to numerical analysts who consider this linear.} convergence to the solution.

\begin{theorem}[\cite{Strohmer2009Arand}]\label{thm:rkconv}
Suppose that $Ax = b$ is consistent with solution $x^\ast$. Then the iterates produced by applying randomized Kaczmarz to this system satisfy:
\[
\mathbb{E}\left(\norm{x^t - x^\ast}^2\right) \leq \left( 1 - \frac{\sigma_{\mathrm{min}}^2}{\norm{A}_F^2}\right)^t \norm{x^0 - x^\ast}^2.
\]
\end{theorem}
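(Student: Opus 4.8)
The plan is to establish the contraction in a single iteration and then iterate, taking expectations at each step. Consider one update $x^t = x^{t-1} - \frac{a_i^\top x^{t-1} - b_i}{\norm{a_i}^2}a_i$, where row $i$ is selected with probability $\norm{a_i}^2/\norm{A}_F^2$. First I would use the geometric fact that the Kaczmarz update projects $x^{t-1}$ orthogonally onto the hyperplane $\{x : a_i^\top x = b_i\}$, which contains the solution $x^\ast$. By the Pythagorean theorem applied to the right triangle with vertices $x^{t-1}$, $x^t$, and $x^\ast$, the error decomposes as
\begin{equation*}
\norm{x^t - x^\ast}^2 = \norm{x^{t-1} - x^\ast}^2 - \norm{x^t - x^{t-1}}^2.
\end{equation*}
The subtracted term is precisely the squared distance from $x^{t-1}$ to the hyperplane, namely $\norm{x^t - x^{t-1}}^2 = \frac{(a_i^\top x^{t-1} - b_i)^2}{\norm{a_i}^2} = \frac{(a_i^\top (x^{t-1} - x^\ast))^2}{\norm{a_i}^2}$, using consistency $b_i = a_i^\top x^\ast$.

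Next I would condition on $x^{t-1}$ and take the expectation over the random choice of row $i$. With the sampling probabilities $\norm{a_i}^2/\norm{A}_F^2$, the factor $\norm{a_i}^2$ in the denominator cancels against the probability weight, yielding
\begin{equation*}
\mathbb{E}\left(\norm{x^t - x^\ast}^2 \mid x^{t-1}\right) = \norm{x^{t-1} - x^\ast}^2 - \frac{1}{\norm{A}_F^2}\sum_{i=1}^m \left(a_i^\top (x^{t-1} - x^\ast)\right)^2 = \norm{x^{t-1} - x^\ast}^2 - \frac{\norm{A(x^{t-1} - x^\ast)}^2}{\norm{A}_F^2}.
\end{equation*}
The key step is then to lower bound $\norm{A(x^{t-1} - x^\ast)}^2$ in terms of $\norm{x^{t-1} - x^\ast}^2$. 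Here I would invoke the fact that $x^{t-1} - x^\ast$ lies in the row space of $A$ (since the Kaczmarz iterates move only along directions $a_i$, the error stays in $\mathrm{range}(A^\top)$ provided $x^0 - x^\ast$ does, or one restricts to that component), so the relevant bound is $\norm{A v}^2 \geq \sigma_{\min}^2 \norm{v}^2$ where $\sigma_{\min}$ is the smallest \emph{nonzero} singular value. This gives the per-step contraction factor $1 - \sigma_{\min}^2/\norm{A}_F^2$.

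The main obstacle I anticipate is the justification of the singular value bound, specifically ensuring that $\sigma_{\min}^2$ refers to the smallest nonzero singular value and that the error vector indeed lies in the appropriate subspace. One must verify that the iterates remain in an affine subspace where this Rayleigh-quotient bound is valid; this is automatic when $Ax=b$ has a unique solution (full column rank), but in the underdetermined ranking setting it requires care about which solution $x^\ast$ the iterates converge to. Finally, I would complete the argument by taking the full (unconditioned) expectation via the tower property and iterating the one-step bound $t$ times:
\begin{equation*}
\mathbb{E}\left(\norm{x^t - x^\ast}^2\right) \leq \left(1 - \frac{\sigma_{\min}^2}{\norm{A}_F^2}\right)\mathbb{E}\left(\norm{x^{t-1} - x^\ast}^2\right) \leq \cdots \leq \left(1 - \frac{\sigma_{\min}^2}{\norm{A}_F^2}\right)^t \norm{x^0 - x^\ast}^2,
\end{equation*}
which is the claimed bound.
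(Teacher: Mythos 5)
This theorem is quoted from \cite{Strohmer2009Arand} and the paper gives no proof of it, so there is no in-paper argument to compare against. Your proposal is the standard (and correct) Strohmer--Vershynin argument: orthogonal projection plus the Pythagorean identity for the one-step decrease, cancellation of $\norm{a_i}^2$ against the sampling weights to produce $\norm{A(x^{t-1}-x^\ast)}^2/\norm{A}_F^2$, the Rayleigh-quotient bound via $\sigma_{\min}$, and the tower property to iterate. The one caveat you raise---that $\sigma_{\min}$ must be the smallest \emph{nonzero} singular value and the error must lie in $\mathrm{range}(A^\top)$, which requires $x^\ast$ to be the solution nearest $x^0$ when the system is underdetermined---is exactly the right point of care, and is relevant here since the ranking system $Qx\leq-\varepsilon$ has $(1,\dots,1)$ in its kernel; in the original theorem the matrix is assumed full column rank so this is automatic.
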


This result spurred a boom in related research, including Kaczmarz variants with differing row selection protocols \cite{Haddock2021Greed,Gower2015Random}, block update methods \cite{Needell2014Paved,Necoara2019Faster}, and adaptive methods \cite{Gower2021OnAda}.

\subsection{Additive Noisy Setting}
\label{sec:additivenoise}
Several results have shown convergence of the Kaczmarz method or SGD more generally in the case when additive noise is added to the right-hand side of $Ax = b$. For the Kaczmarz update, the iterates converge linearly to the least squares solution up to some radius that depends on the norm of the noise \cite{needell2010randomized,schopfer2022extended}. There is a body of work on handling even arbitrarily large levels of additive noise as well \cite{zouzias2013randomized,quantHNRS20}. However, the noise we consider in the ranking setting is not additive, as it involves a ``flip" of the inequality, or equivalently multiplication of the rows of the matrix $A$ by $\pm 1$. Existing work in the setting of multiplicative noise typically focuses on motivations from deep learning \cite{wu2019multiplicative,hodgkinson2021multiplicative}, whereas here we consider a very specific noise model arising from errors in the
comparisons.

\subsection{Kaczmarz for Feasibility}
\label{sec:kaczmarzfeasibility}
Stochastic gradient descent, and in particular the Kaczmarz method, have also seen broad use for systems of linear inequalities and other feasibility problems \cite{Leventhal2010RandomizedMF,DeLoeraMotzkin16}. In its simplest form, the Kaczmarz method for inequalities acts essentially the same way as in the setting of equality constraints, except that no projection is made if the constraint is already satisfied. Otherwise, a projection is made onto the space defining that constraint. As in the case of linear equalities \cite{cai2012exponential}, stochastic gradient descent step sizes may be chosen to perform an over-projection, taking the iterate farther into the feasible region, or even under-projection, stopping short of the feasible region. Leventhal and Lewis proved that the Kaczmarz method for inequalities (see \cref{alg:kaczrank}), introduced by Agmon \cite{agmon1954relaxation}, offers convergence with the same rate as in the setting of equalities \cite{Leventhal2010RandomizedMF}.

\subsection{Rank Aggregation}
\label{sec:rankaggregation}
Rank aggregation from pairwise comparisons or preferences has
a wide range of applications in recommendation systems, competitions, information retrieval, and elsewhere. The literature on the topic is vast, and thus in this section, we discuss only those works most related to ours.

Support Vector Machines (SVMs), a popular class of supervised machine learning algorithms for classification and regression tasks, have been successfully applied in learning retrieval functions \cite{herbrich1999support,herbrich2000large,joachims2002optimizing}.
SVMs can be used to learn a linear scoring function for pairwise comparisons; for example, for automatically optimizing the retrieval quality of search engines using clickthrough data \cite{joachims2002optimizing}.
In \cite{wauthier2013efficient}, two simple algorithms for efficient ranking from pairwise comparisons based on scoring functions are proposed. One predicts rankings with approximately uniform quality across the ranking, while the other predicts the true ranking with higher quality near the top of the ranking than the bottom. It is shown that the algorithms in expectation achieve a lower bound on the sample complexity for predicting a ranking with fixed expected Kendall tau
distance. As such, they are competitive alternatives to the SVM, which also achieves the lower bound.

In \cite{negahban2012iterative}, an iterative aggregation algorithm for extracting scores of
objects given noisy pairwise comparisons is proposed where the algorithm has a natural random walk interpretation over
the graph of objects.
The efficacy of the algorithm is studied by analyzing its performance when data is generated under the Bradley-Terry-Luce (BTL) model.
The robustness of rank aggregation from pairwise comparisons in the presence of adversarial corruptions is initiated and studied in \cite{agarwal2020rank} under
the BTL model.
A strong contamination model is studied, where an adversary having complete knowledge of the initial truthful data and the true
BTL weights, can corrupt this data.

In \cite{Borkar2013Rand}, the authors also employ the randomized Kaczmarz method for a certain rank aggregation problem. In particular, they work under the BTL model and use randomized Kaczmarz to solve a linear system originating from the adjacency matrix of a graph arising under this model. They show that the object weights are recovered to arbitrary accuracy. This model and methodology are distinct to ours, and we show that recovering object weights is not sufficient to recover the ranking. We furthermore evaluate our results under different metrics, and focus on the mathematical question of when stochastic gradient-type methods can be used directly on the observations to unveil the underlying ranking via a feasible point. Although ranking problems are our motivation, we believe this mathematical question is interesting in its own right and has a wide array of other applications in feasible region problems.

Note finally that in \cite{radinsky2011ranking}, the authors employ results from statistical learning theory to show that in order to obtain a ranking of $n$ items in which each element is an average of $O(n/C)$ positions away from its position in the optimal ranking, one needs to sample $O(nC^2$) pairs uniformly at random, for any $C > 0$.

\subsection{Necessary and Sufficient Pairwise Comparisons}
\label{sec:howmany}

We divert our attention briefly to the question of how many pairwise comparisons are necessary and sufficient in order to be able to recover the true underlying ranking. 

Mathematically, it must be ensured that the polytope $Qx \leq -\varepsilon$ has dimension one.
To recover the true ranking, it is necessary and sufficient to know the neighbor comparisons $x_{\pi(j+1)} > x_{\pi(j)}$ for $j = 1, ..., n-1$ that we refer to as `backbone' of the ranking.
Given this, we can quantify how many comparisons we need to sample, depending on how the sampling takes place. 

We consider some simple examples below:

\begin{enumerate}[(i)]
    \item If a knowledgeable friend is providing the comparisons, we may obtain the backbone in $n-1$ samples.
    \item If a knowledgeable adversary is providing the comparisons, they may withhold a backbone comparison until the last sample, requiring the full ${n \choose 2}$ comparisons.
    \item If comparisons are sampled uniformly with replacement from the full set of ${n \choose 2}$ possible comparisons, then this is a variation on the coupon collector problem: we need to collect a specified subset of $n-1$ coupons from a set of ${n \choose 2}$ total. The expected number of samples needed to do so is
    \[
    \frac{n(n-1)}{2}\sum_{i=1}^{n-1} \frac{1}{i} = \mathcal{O}(n^2 \log n).
    \]
    \item If comparisons are sampled uniformly \textit{without} replacement, this is a less-studied variation on the coupon collector problem. To compute the expected number of samples needed, consider the following restatement of the problem: in a random binary string consisting of $n$ ones and $m$ zeros, what is the expected position of the last zero? 
    
    We may solve this by considering the average number of ones between two zeros, i.e., the average length of a `run' of ones. After placing $m$ zeros, there are $m+1$ slots to place ones, and $n$ of them to place (we do not require that no two zeroes be adjacent). Thus the average number of ones between two zeros is $n/(m+1)$. Hence, the expected position of the last zero is $n+m - n/(m+1)$.
    
    Porting this back to our setting, we expect to need to sample $(n-1)(n/2 - 1/2 + 1/n) = \mathcal{O}(n^2)$ comparisons.
    \item If we are able to choose which comparisons we want to obtain (the previously mentioned \textit{active} setting), then this is equivalent to doing comparison-based sorting, which can be done with $\mathcal{O}(n\log n)$ comparisons (using, for example, merge sort).
\end{enumerate}

The sampling protocol used in practice is dependent on the situation at hand: for example, a wine subscription company could effectively perform merge sort by sending subscribers specific, non-random pairs of wines. However, a college football season with a predetermined schedule is akin to sampling without replacement, as described above.

\section{KaczRank: Method and Theoretical Guarantees}
\label{sec:main}
We introduce KaczRank, a modification of the randomized Kaczmarz for inequalities method introduced in \cite{Leventhal2010RandomizedMF} applied to the system of inequalities induced by the observed pairwise comparisons. We show the method in full detail in \cref{alg:kaczrank}. In the next two sections, we focus on the Kaczmarz version of SGD, and leave a discussion of other step size choices for \cref{sec:stepsizes}. 
\begin{algorithm}
\caption{KaczRank \cite{Leventhal2010RandomizedMF}}\label{alg:kaczrank}
\begin{algorithmic}[1]
\Procedure{KaczRank($\beta$) }{Input: initial iterate $x^0$, comparisons $\{(\varphi^t, -\varepsilon)\}_{t=1}^{T}$}
\For{$t = 1, 2, \dots, T$}
\State{Compute $r^t = \langle \varphi^t, x^{t-1} \rangle + \varepsilon$}
\If{ $r^t > 0$}
\State{Update $x^{t} = x^{t-1} - \frac{\langle \varphi^t, x^{t-1} \rangle + \varepsilon}{\norm{\varphi^t}^2}\varphi^t$}
\Else
\State{$x^t = x^{t-1}$}
\EndIf
\EndFor{}

\Return{$\operatorname{ranking}(x^T)$}
\EndProcedure{}
\end{algorithmic}
\end{algorithm}

We now proceed with a theoretical analysis of applying KaczRank in the setting where the sequence of comparisons $(\varphi^t)_{t=1}^{\infty}$ is formed by drawing comparisons uniformly at random from the full set of $m = {n \choose 2}$ pairwise comparisons of $n$ objects, which as mentioned in \cref{sec:background} is equivalent to sampling rows from the system $Qx \leq - \varepsilon$. 
We begin with our main result, which shows that our iterates get increasingly close in expectation to the feasible region: 

\begin{corollary}\label{cor:l2conv_perfectinfo}
Let $S$ be the feasible region for the system of linear equalities $Qx \leq -\varepsilon$. Then the iterates $(x^t)_{t=1}^{T}$ formed by applying KaczRank, with initial iterate $x^0$, to a sequence of comparisons $(\varphi^t)_{t=1}^T$ sampled uniformly at random from the set of all pairwise comparisons of $n$ objects satisfy
\begin{equation}
    \mathbb{E}[d(x^{t}, S)^2] \leq \left(1 - \frac{n}{2m}\right)^{t} d(x^0, S)^2,
\end{equation}
where $d(x, S) = \inf\{\norm{x-s}_2 : s \in S\}$.
\end{corollary}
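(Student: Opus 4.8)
The plan is to obtain \cref{cor:l2conv_perfectinfo} as a direct specialization of the Leventhal--Lewis convergence theorem for randomized Kaczmarz applied to linear inequalities \cite{Leventhal2010RandomizedMF}, which (as noted in \cref{sec:kaczmarzfeasibility}) yields linear convergence of $\mathbb{E}[d(x^t,S)^2]$ with a contraction factor of the same form $1 - \sigma_{\min}^2/\norm{Q}_F^2$ as the equality rate of \cref{thm:rkconv}. The first thing I would check is that the uniform row sampling assumed here coincides with the norm-proportional sampling required by that theorem: every comparison vector $\varphi$ has exactly one $+1$ and one $-1$, so $\norm{\varphi}^2 = 2$ for all rows, and hence sampling a row with probability $\norm{\varphi}^2/\norm{Q}_F^2$ is identical to sampling uniformly. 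With this in hand the problem reduces to computing the two scalar quantities $\norm{Q}_F^2$ and $\sigma_{\min}^2(Q)$ for the matrix $Q$ built from all $m = \binom{n}{2}$ pairwise comparisons.

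Computing the Frobenius norm is immediate: each of the $m$ rows contributes $\norm{\varphi}^2 = 2$, so $\norm{Q}_F^2 = 2m$. For the smallest singular value I would recognise $Q$ as the oriented incidence matrix of the complete graph $K_n$, so that $Q^\top Q = L$ is the graph Laplacian, which for $K_n$ equals $nI - J$ with $J$ the all-ones matrix. Its eigenvalues are $0$, with eigenvector $\mathbf{1}$, and $n$ with multiplicity $n-1$. Substituting $\sigma_{\min}^2 = n$ and $\norm{Q}_F^2 = 2m$ into the Leventhal--Lewis rate gives the claimed per-step factor $1 - n/(2m)$, and iterating the one-step contraction over $t$ via the tower property of conditional expectation yields the stated bound.

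The point requiring care is that $Q$ is rank deficient: since $\ker Q = \operatorname{span}\{\mathbf{1}\}$ (already observed in \cref{sec:background}), the literal smallest singular value is $0$, and one must argue that the relevant quantity is instead the smallest \emph{nonzero} singular value. This is legitimate because the feasible region $S = \{x : Qx \leq -\varepsilon\mathbf{1}\}$ is invariant under translation by $\mathbf{1}$, so $d(\cdot,S)$ depends only on the component of the iterate orthogonal to $\mathbf{1}$; on $\mathbf{1}^{\perp}$ the operator $Q^\top Q$ acts as $nI$, and the effective smallest singular value squared is exactly $n$.

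The mechanism underlying the Leventhal--Lewis bound that I would want to verify in this setting, and the step I expect to be the main obstacle, is the following. The halfspace-projection update of \cref{alg:kaczrank} is Fej\'er monotone with respect to $S$, giving the conditional one-step estimate
\begin{equation*}
    \mathbb{E}\bigl[d(x^t,S)^2 \mid x^{t-1}\bigr] \leq d(x^{t-1},S)^2 - \frac{1}{2m}\,\norm{(Q x^{t-1} + \varepsilon \mathbf{1})_+}^2,
\end{equation*}
where $(\cdot)_+$ denotes the positive part and the factor $1/(2m)$ comes from uniform sampling together with the common row norm $\norm{\varphi}^2 = 2$. Closing the recursion then requires the Hoffman-type error bound $\norm{(Q x + \varepsilon \mathbf{1})_+}^2 \geq n\, d(x,S)^2$. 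For inequality systems the Hoffman constant need not equal $1/\sigma_{\min}$ in general, so the crux is to confirm that the symmetric, complete-graph structure of $Q$ forces this bound to hold with the full constant $n$ rather than a smaller one; this is the estimate on which the sharpness of the rate hinges, and where I would concentrate the effort, appealing directly to \cite{Leventhal2010RandomizedMF} if it supplies exactly this inequality.
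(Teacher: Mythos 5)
Your proposal follows essentially the same route as the paper: invoke the Leventhal--Lewis inequality theorem (\cref{thm:initialconvergence}), observe that uniform sampling coincides with norm-weighted sampling because every row of $Q$ has squared norm $2$ (so $\norm{Q}_F^2 = 2m$), and identify $Q^\top Q$ with the Laplacian of the complete graph, whose smallest positive eigenvalue is $n$; your one-step Fej\'er inequality is indeed the mechanism behind the Leventhal--Lewis rate, and your handling of the rank deficiency via translation-invariance along $\mathbf{1}$ is correct. However, the step you explicitly defer --- the error bound $\norm{(Qx+\varepsilon\mathbf{1})_+}^2 \geq n\, d(x,S)^2$, equivalently that the Hoffman constant of the \emph{inequality} system satisfies $L(Q) \leq 1/\sqrt{n}$ --- is the entire content of the corollary beyond bookkeeping, and leaving it conditional on whether \cite{Leventhal2010RandomizedMF} ``supplies exactly this inequality'' is a genuine gap: that reference states the rate only in terms of the abstract Hoffman constant and does not relate it to $\sigma_{\min}^{+}(Q)$. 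The paper closes this gap with the characterization of \cite{PenaHoffman}: since $Q(\mathbb{R}^n)+\mathbb{R}^m_{+}=\mathbb{R}^m$, \cref{lem:specialhoff} gives $L(Q) = 1/\min\{\norm{Q^\top v} : v \in \mathbb{R}^m_{+},\ \norm{v}=1\}$, and the paper then bounds this minimum below by $\sigma_{\min}^{+}(Q)=\sqrt{n}$.

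That said, your instinct that ``for inequality systems the Hoffman constant need not equal $1/\sigma_{\min}$'' is exactly the right worry, and it is not fully dispelled by the paper's argument either: the minimum in \cref{lem:specialhoff} is taken over the nonnegative orthant, and relaxing it to all of $\mathbb{R}^m$ yields $0$ rather than $\sigma_{\min}^{+}(Q)$, because $Q^\top$ has an $(m-n+1)$-dimensional kernel. Worse, taking $v = e_k$ (a single coordinate vector) shows that the constrained minimum is at most $\norm{\varphi_k} = \sqrt{2}$, so the bound $L(Q) \leq 1/\sqrt{n}$ cannot follow from a relaxation to the unconstrained sphere and would require a genuinely separate argument exploiting the orthant constraint and the structure of $K_n$. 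So you correctly located the crux of the proof, but your proposal does not supply the decisive estimate, and the corollary does not follow without it.
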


We will discuss the implications of this theorem further below. To prove this result, we begin with the following result of Lewis and Leventhal \cite{Leventhal2010RandomizedMF}:

\begin{theorem}[\cite{Leventhal2010RandomizedMF}]\label{thm:initialconvergence}
Let $S$ be the feasible region for the system of linear equalities $Qx \leq -\varepsilon$. Then the iterates $(x^t)_{t=1}^{T}$ formed by applying KaczRank, with initial iterate $x^0$, to a sequence of comparisons $(\varphi^t)_{t=1}^T$ sampled uniformly at random from the set of all pairwise comparisons of $n$ objects satisfy
\begin{equation}
    \mathbb{E}[d(x^t, S)^2] \leq \left(1-\frac{1}{2L^2 m}\right)^{t} d(x^0, S)^2,
\end{equation}
where $L$ is the Hoffman constant for the system $Qx \leq -\varepsilon$, and $d(x, S) = \inf\{\norm{x-s}_2 : s \in S\}$.
\end{theorem}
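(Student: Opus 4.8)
The plan is to prove a one-step expected contraction and then iterate it, following the Fej\'er-monotonicity strategy of Leventhal and Lewis specialized to our system and sampling scheme. Fix an iteration $t$ and condition on the current iterate $x^{t-1}$. The central step is a deterministic analysis of a single projection. Let $\varphi = \varphi^{i}$ be the sampled comparison and $r = \langle \varphi, x^{t-1}\rangle + \varepsilon$ the residual computed in \cref{alg:kaczrank}. If $r \leq 0$ the constraint is already satisfied, KaczRank leaves the iterate unchanged, and $d(x^t, S) = d(x^{t-1}, S)$. If $r > 0$, the update is $x^t = x^{t-1} - \frac{r}{\norm{\varphi}^2}\varphi$, and I would compare $x^t$ against a fixed feasible point $s \in S$, ultimately taking $s = P_S(x^{t-1})$. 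Expanding $\norm{x^t - s}^2$ and using that feasibility of $s$ forces $\langle \varphi, s\rangle \leq -\varepsilon$, hence $\langle \varphi, x^{t-1} - s\rangle \geq r$, yields the Fej\'er-type bound $\norm{x^t - s}^2 \leq \norm{x^{t-1} - s}^2 - r^2/\norm{\varphi}^2$. Since $P_S(x^{t-1}) \in S$ gives $d(x^t, S)^2 \leq \norm{x^t - P_S(x^{t-1})}^2$, the two cases combine into the single inequality $d(x^t, S)^2 \leq d(x^{t-1}, S)^2 - r_+^2/\norm{\varphi}^2$, where $r_+ = \max(r, 0)$.

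The second step is to average over the uniformly sampled comparison. Because every comparison vector has exactly one $+1$ and one $-1$ entry, $\norm{\varphi^i}^2 = 2$ for all $i$, so averaging the per-row term over the $m$ rows produces $\frac{1}{2m}\norm{(Qx^{t-1} + \varepsilon)_+}^2$, where $(\cdot)_+$ denotes the entrywise positive part. This gives $\mathbb{E}[d(x^t, S)^2 \mid x^{t-1}] \leq d(x^{t-1}, S)^2 - \frac{1}{2m}\norm{(Qx^{t-1} + \varepsilon)_+}^2$.

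The third step converts the residual norm into a distance via the Hoffman constant. By the defining property of $L$ for the system $Qx \leq -\varepsilon$, we have $d(x^{t-1}, S) \leq L\norm{(Qx^{t-1} + \varepsilon)_+}$, so $\norm{(Qx^{t-1} + \varepsilon)_+}^2 \geq L^{-2}\, d(x^{t-1}, S)^2$. Substituting yields the one-step contraction $\mathbb{E}[d(x^t, S)^2 \mid x^{t-1}] \leq (1 - \frac{1}{2L^2 m})\, d(x^{t-1}, S)^2$. Taking total expectation and applying the tower property repeatedly over $t$ then gives the claimed geometric bound.

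The hard part is not the iteration but pinning down the single-step constants and aligning the definition of the Hoffman constant with the Euclidean residual norm used in the averaging step. In particular, the factor $1/2$ in the rate traces directly to $\norm{\varphi^i}^2 = 2$, and the inequality $\langle \varphi, x^{t-1} - s\rangle \geq r$ is precisely where the slack $\varepsilon$ and the feasibility of $s$ are indispensable. Since this result is the specialization of Leventhal and Lewis to the present system, I would take the Hoffman bound as given, as it serves to \emph{define} $L$; the substance of the argument lies in reassembling their Fej\'er-monotonicity computation for our uniform row sampling and equal-norm comparison vectors.
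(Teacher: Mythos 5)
Your proof is correct, but note that the paper itself offers no proof of this statement: it is imported verbatim as a result of Leventhal and Lewis, and the paper's own work begins only afterward, in estimating the Hoffman constant ($L(Q) \leq 1/\sqrt{n}$) to turn the theorem into \cref{cor:l2conv_perfectinfo}. What you have written is a faithful reconstruction of the cited Leventhal--Lewis argument, correctly specialized to this system: the Fej\'er-monotone one-step bound $\norm{x^t - s}^2 \leq \norm{x^{t-1}-s}^2 - r_+^2/\norm{\varphi}^2$ taken at $s = P_S(x^{t-1})$, the observation that uniform row sampling coincides with their $\norm{a_i}^2/\norm{A}_F^2$ sampling precisely because every row of $Q$ has squared norm $2$ (whence $\norm{Q}_F^2 = 2m$ and the factor $\tfrac{1}{2L^2m}$ in the rate), the Hoffman inequality $d(x,S) \leq L\norm{(Qx + \varepsilon)_+}$ used as the definition of $L$, and the tower property to iterate the conditional contraction. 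The constants all check out, so your reconstruction can stand in for the citation; the only thing to flag is attribution rather than mathematics.
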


We are able to estimate the Hoffman constant $L$ for the system $Qx \leq -\varepsilon$. In \cite{PenaHoffman} the authors introduce the following characterization of the Hoffman constant:

\begin{theorem}[\cite{PenaHoffman}]
Suppose $A \in \mathbb{R}^{m \times n}$. Then the Hoffman constant of $A$, $L(A)$, is given by
\begin{equation}
    L(A) = \max_{J \in \mathcal{S}(A)} \frac{1}{\min_{v \in \mathbb{R}^J_{+}, \norm{v} = 1} \norm{A_J^\top v}},
\end{equation}
where $\mathcal{S}(A)$ is the collection of row subsets $J \subseteq \{1, ..., m\}$ such that $A_J(\mathbb{R}^n) + \mathbb{R}^J_{+} = \mathbb{R}^J$, that is, any vector $v \in \mathbb{R}^J$ can be written as the sum of a vector in the image of $A_J$ and a vector in $\mathbb{R}^J_{+}$, the set of vectors in $\mathbb{R}$ with non-negative entries.
\end{theorem}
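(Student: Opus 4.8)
The plan is to prove the identity by showing that the right-hand side is simultaneously an upper and a lower bound for the Hoffman constant, where I take $L(A)$ to be the least $L$ for which $d(x, P_b) \le L\,\norm{(Ax-b)_+}$ holds for every right-hand side $b$ with $P_b := \{y : Ay \le b\}$ nonempty and every $x$. The single tool driving both directions is a dual (support-function) representation of the distance to a polyhedron: for $P_b \neq \emptyset$,
\[
d(x, P_b) = \max\{\langle v, Ax - b\rangle : v \ge 0,\ \norm{A^\top v} \le 1\},
\]
which I would establish by writing $d(x,P_b) = \sup_{\norm{w}\le 1}(\langle w, x\rangle - \sigma_{P_b}(w))$, expressing the support function as $\sigma_{P_b}(w) = \min\{\langle b, v\rangle : A^\top v = w,\ v \ge 0\}$ via LP duality, and substituting $w = A^\top v$.

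Before using this, I would record the meaning of the index family $\mathcal{S}(A)$. By a theorem of the alternative (Gordan), the surjectivity condition $A_J(\mathbb{R}^n) + \mathbb{R}^J_+ = \mathbb{R}^J$ defining $J \in \mathcal{S}(A)$ is equivalent to the nonexistence of a nonzero $v \in \mathbb{R}^J_+$ with $A_J^\top v = 0$. Consequently $\gamma_J := \min\{\norm{A_J^\top v} : v \in \mathbb{R}^J_+,\ \norm{v} = 1\}$ is attained by compactness and is strictly positive exactly when $J \in \mathcal{S}(A)$; this is precisely why $1/\gamma_J$ is finite on, and only on, $\mathcal{S}(A)$, which explains the range of the maximum.

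For the upper bound $L(A) \le \max_{J \in \mathcal{S}(A)} 1/\gamma_J$, fix $x$ and a feasible $b$ and let $v^\ast \ge 0$ with $\norm{A^\top v^\ast}\le 1$ attain the dual maximum, so $d(x,P_b) = \langle v^\ast, Ax - b\rangle$. If the support $J$ of $v^\ast$ is not in $\mathcal{S}(A)$, there is a nonzero $u \ge 0$ supported on $J$ with $A^\top u = 0$; since $\langle u, Ax-b\rangle$ must vanish (else the objective would be unbounded along $v^\ast + tu$ or $v^\ast$ would not be optimal), moving to $v^\ast - tu$ leaves feasibility and objective unchanged while driving a coordinate to zero, so I may iterate until the support lands in $\mathcal{S}(A)$. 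Then $v^\ast \ge 0$ and Cauchy--Schwarz give $d(x,P_b) = \langle v^\ast, Ax-b\rangle \le \langle v^\ast, (Ax-b)_+\rangle \le \norm{v^\ast}\,\norm{(Ax-b)_+}$, and $\norm{v^\ast} \le \norm{A_J^\top v^\ast}/\gamma_J \le 1/\gamma_J$ finishes this direction.

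For the matching lower bound, fix any $J \in \mathcal{S}(A)$, let $v^\ast$ attain $\gamma_J$, and extend it by zeros to $\mathbb{R}^m_+$. I would construct an instance whose residual $r = Ax - b$ equals $v^\ast$ on $J$ and is made as negative as desired off $J$; since $J \in \mathcal{S}(A)$ the subsystem $A_J y \le b_J$ is feasible, and pushing the off-$J$ entries of $b$ upward keeps a common feasible point, so $P_b \neq \emptyset$. Feeding the rescaled dual vector $v^\ast/\gamma_J$ into the dual formula yields $d(x,P_b) \ge \langle v^\ast/\gamma_J, r\rangle = \norm{v^\ast}^2/\gamma_J = 1/\gamma_J$, while $\norm{(Ax-b)_+} = \norm{v^\ast} = 1$, so the ratio reaches $1/\gamma_J$; taking the maximum over $J \in \mathcal{S}(A)$ completes the identity. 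I expect the genuine obstacle to be the support-reduction step: marrying the linear-algebraic optimality analysis of the dual solution to the combinatorial definition of $\mathcal{S}(A)$ (through the alternative and a Carath\'eodory-type pruning) is where all the structure lives, whereas the two chains of inequalities are routine once the dual representation is in hand.
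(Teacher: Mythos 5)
The paper does not prove this statement at all: it is imported verbatim from \cite{PenaHoffman} and used as a black box (together with the subsequent lemma) to bound the Hoffman constant of $Q$, so there is no internal proof to compare yours against. Judged on its own terms, your argument is sound and essentially reconstructs the characterization from the cited reference. The dual formula $d(x,P_b)=\max\{\langle v,Ax-b\rangle : v\ge 0,\ \norm{A^\top v}\le 1\}$ is correct, and its maximum is genuinely attained (e.g.\ via the KKT conditions for projecting onto $P_b$, which produce $x-y^\ast=A^\top\lambda$ with $\lambda\ge0$ and complementary slackness), a point worth making explicit since your pruning argument starts from an optimal $v^\ast$. Gordan's theorem correctly identifies $\mathcal{S}(A)$ with the supports on which $\gamma_J>0$; the support-reduction step works because any nonzero $u\ge0$ in $\ker A^\top$ supported inside $\operatorname{supp}(v^\ast)$ must satisfy $\langle u, Ax-b\rangle=0$ at optimality, so sliding along $-u$ preserves feasibility and the objective while zeroing a coordinate; and the lower-bound instance ($r=v^\ast$ on $J$, arbitrarily negative off $J$) has $P_b\neq\emptyset$ precisely because $J\in\mathcal{S}(A)$ makes $A_Jy<0$ feasible. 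Two small points to tighten: dispose of the terminal case $v^\ast=0$ of the pruning (then $d(x,P_b)=0$ and there is nothing to prove), and state explicitly that the pruning terminates with $\operatorname{supp}(v^\ast)\in\mathcal{S}(A)$ because the termination condition --- no nonzero $u\in\mathbb{R}^J_+$ with $A_J^\top u=0$ --- is exactly the Gordan-dual form of membership in $\mathcal{S}(A)$. With those remarks added, the proof is complete and, if anything, more self-contained than what the paper offers, since the paper offers nothing.
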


Note that $\mathcal{S}(A)$ may also be characterized as the collection of row subsets $J \subseteq \{1, ..., m\}$ such that $A_J x < 0$ is feasible. In the case of the system $Qx \leq -\varepsilon$, that is all row subsets.

We are then able to exploit the following additional result of \cite{PenaHoffman}:

\begin{lemma}[\cite{PenaHoffman}]\label{lem:specialhoff}
Suppose that $A \in \mathbb{R}^{m \times n}$ and that $A(\mathbb{R}^n) + \mathbb{R}^m_{+} = \mathbb{R}^m$. Then
\begin{equation}
    L(A) = \frac{1}{\min_{v \in \mathbb{R}^m_{+}, \norm{v} = 1} \norm{A^\top v}}.
\end{equation}
\end{lemma}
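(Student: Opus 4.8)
The plan is to deduce this as a special case of the preceding general characterization of the Hoffman constant, in which the maximum ranges over all subsets $J \in \mathcal{S}(A)$. The hypothesis $A(\mathbb{R}^n) + \mathbb{R}^m_{+} = \mathbb{R}^m$ says precisely that the full row set $J = \{1, \dots, m\}$ itself belongs to $\mathcal{S}(A)$, so the term
\[
\frac{1}{\min_{v \in \mathbb{R}^m_{+},\, \norm{v} = 1} \norm{A^\top v}}
\]
appears among those over which the maximum is taken. It therefore suffices to show that this term dominates all the others, i.e., that the maximum in the general formula is attained at the full row set.

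First I would establish a monotonicity of the inner minimum under zero-padding. Given any $J \subseteq \{1, \dots, m\}$ and any $v \in \mathbb{R}^J_{+}$ with $\norm{v} = 1$, extend $v$ to $\tilde v \in \mathbb{R}^m$ by setting $\tilde v_i = v_i$ for $i \in J$ and $\tilde v_i = 0$ otherwise. Then $\tilde v \in \mathbb{R}^m_{+}$, $\norm{\tilde v} = \norm{v} = 1$, and, since the padded entries multiply their rows by zero, $A^\top \tilde v = A_J^\top v$. Consequently
\[
\min_{w \in \mathbb{R}^m_{+},\, \norm{w}=1} \norm{A^\top w} \;\leq\; \norm{A^\top \tilde v} \;=\; \norm{A_J^\top v}.
\]
Taking the minimum of the right-hand side over all admissible $v$ yields
\[
\min_{w \in \mathbb{R}^m_{+},\, \norm{w}=1} \norm{A^\top w} \;\leq\; \min_{v \in \mathbb{R}^J_{+},\, \norm{v}=1} \norm{A_J^\top v}
\]
for every $J \in \mathcal{S}(A)$. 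Inverting (both sides being positive) shows that the full-set term is at least as large as the term for any $J$, so the maximum equals the full-set term, which is the claimed identity.

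The one point requiring care is the well-definedness of these reciprocals, i.e., that the inner minima are strictly positive. Since $J \in \mathcal{S}(A)$ is equivalent to feasibility of $A_J x < 0$, a theorem of the alternative (Gordan) guarantees there is no nonzero $v \in \mathbb{R}^J_{+}$ with $A_J^\top v = 0$; hence $\norm{A_J^\top v} > 0$ on the compact intersection of the unit sphere with $\mathbb{R}^J_{+}$, and the continuous map $v \mapsto \norm{A_J^\top v}$ attains a positive minimum there. This justifies both the reciprocals and the inversion step above. I expect the zero-padding identity $A^\top \tilde v = A_J^\top v$ and securing the correct direction of the resulting inequality to be the only substantive points; everything else is bookkeeping around the general characterization theorem.
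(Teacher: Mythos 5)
Your argument is correct. Note that the paper itself offers no proof of this lemma --- it is imported verbatim from the cited reference --- so there is nothing internal to compare against; what you have done is supply a self-contained derivation from the general characterization theorem stated just before it, and that derivation holds up. The three ingredients all check out: (i) the hypothesis $A(\mathbb{R}^n)+\mathbb{R}^m_{+}=\mathbb{R}^m$ places the full row set in $\mathcal{S}(A)$, so the claimed expression is one of the terms in the maximum; (ii) the zero-padding identity $A^\top \tilde v = \sum_{i\in J} v_i a_i = A_J^\top v$ is exactly right and gives the inequality $\min_{w\in\mathbb{R}^m_{+},\,\norm{w}=1}\norm{A^\top w}\leq \min_{v\in\mathbb{R}^J_{+},\,\norm{v}=1}\norm{A_J^\top v}$ in the correct direction, so that after inversion the full-set term dominates every $J\in\mathcal{S}(A)$; and (iii) the appeal to Gordan's theorem (using the paper's remark that $J\in\mathcal{S}(A)$ is equivalent to feasibility of $A_Jx<0$) together with compactness of the intersection of the unit sphere with $\mathbb{R}^J_{+}$ guarantees each inner minimum is attained and strictly positive, which legitimizes taking reciprocals. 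The only cosmetic caveat is that the degenerate case $J=\emptyset$, if one admits it into $\mathcal{S}(A)$, contributes nothing to the maximum and so does not disturb the conclusion. This is a clean way to obtain the special case without consulting the reference.
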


Our matrix $Q \in \mathbb{R}^{{n \choose 2} \times n}$ satisfies the requirements of \cref{lem:specialhoff}. Note furthermore that we can lower bound the above numerator by
\begin{equation}
    \min_{v \in \mathbb{R}^m_{+}, \norm{v} = 1} \norm{Q^\top v} \geq \min_{v \in \mathbb{R}^m, \norm{v} = 1} \norm{Q^\top v} = \sigma_{\mathrm{min}}(Q^\top) = \sigma_{\mathrm{min}}^{+}(Q),
\end{equation}
where the last quantity denotes the smallest positive singular value of $Q$. 
This singular value can be
computed directly, and the original optimization problem is also solvable. One interesting method
is to note that $Q$ is the incidence matrix of the graph $\mathcal{K}_n$, the complete graph on $n$ vertices. We have that $Q\top Q = L_n$, where $L_n$ is the unweighted graph Laplacian matrix \cite{godsil2001algebraic2} of $\mathcal{K}_n$. Therefore, $\sigma_{\mathrm{min}}^{+}(Q) = \sqrt{\lambda_{\mathrm{min}}^{+}(L_n)}$, where $\lambda_{\mathrm{min}}^{+}(L_n)$ denotes the smallest positive eigenvalue of $L_n$, also known as the \emph{algebraic connectivity} of $\mathcal{K}_n$. It is a standard result of spectral graph theory that the algebraic connectivity of $\mathcal{K}_n$ is equal to $n$. Therefore,
\begin{equation}
    L(Q) \leq \frac{1}{\sqrt{n}},
\end{equation}
and thus \cref{cor:l2conv_perfectinfo} follows immediately from \cref{thm:initialconvergence}.

Whilst \cref{cor:l2conv_perfectinfo} shows that our iterates approach the feasible region in expectation in the $2-norm$, there is no direct relationship between the $2-norm$ and the Hamming distance between the rankings of our iterates and the true ranking, that is, the number of items ranked incorrectly. For instance, if the true ranking is $x_1 < x_2 < ... < x_n$, there are vectors arbitrarily close in norm to this cone with the exact reverse ranking. Thus, a more careful consideration of the geometry is necessary to demonstrate that the rankings implied by our iterates do in fact converge to the true ranking. We provide results showing that convergence is achieved in finitely many iterations almost surely, and give an upper bound on the expected number of iterations needed.

We begin with the following lemma, which demonstrates that if we are able to choose which projections are made (rather than them being random), we can always reach the feasible region in at most $N := {n \choose 2}$ steps.

\begin{lemma}\label{lem:n_step_guarantee}
For any initial iterate $x^0 \in \mathbb{R}^n$, there exists a sequence of $N$ projections $P_{i_1}, ..., P_{i_N}$ such that $P_{i_N}P_{i_{N-1}}\hdots P_{i_1}x^0 \in S$.
\end{lemma}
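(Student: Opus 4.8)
The plan is to prove the statement by induction on $n$, peeling off one extreme object at a time and recursing. First I would reduce to a canonical configuration: since the comparisons come from a genuine ranking, relabel the objects so that the target order is $x_1 < x_2 < \cdots < x_n$. Note that each comparison vector $\varphi$ has exactly two nonzero entries, $+1$ and $-1$, so $\norm{\varphi}^2 = 2$ and a single projection $P_i$ alters only the two coordinates involved in that comparison. The base cases $n = 1, 2$ are immediate (requiring $0$ and $1$ projection respectively, matching $N = \binom{n}{2}$), so I would assume the claim for $n-1$ objects and establish it for $n$.

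For the inductive step the idea is to fix the maximal object first. I would apply the $n-1$ projections corresponding to the comparisons $\{(i,n) : 1 \le i < n\}$ incident to the top vertex $n$. Each such projection, when it fires, decreases some $x_i$ and increases $x_n$; crucially, raising $x_n$ only helps every other top comparison $(i',n)$, and lowering $x_i$ does not affect $(i',n)$ for $i' \neq i$. Hence after these at most $n-1$ projections all constraints $x_i - x_n \le -\varepsilon$ hold simultaneously, i.e. $x_n$ exceeds every other coordinate by at least $\varepsilon$. The remaining unsatisfied constraints are exactly the $\binom{n-1}{2}$ comparisons among $\{1,\dots,n-1\}$, which by the inductive hypothesis can be driven into feasibility by a further sequence of $\binom{n-1}{2}$ projections, none of which touches coordinate $n$. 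Padding with trivial (identity) projections if fewer are actually used, the total length is
\[
(n-1) + \binom{n-1}{2} = \binom{n}{2} = N.
\]

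The main obstacle is the preservation argument hiding in the recursive step. Although the recursive projections never change $x_n$, they can \emph{increase} a lower coordinate $x_i$ (whenever $i$ plays the role of the larger object in a sub-problem comparison), which threatens the already-established top constraint $x_i - x_n \le -\varepsilon$. The heart of the proof is therefore to guarantee that the recursion never pushes any $x_i$ above $x_n - \varepsilon$. A naive inductive invariant asserting only feasibility of the sub-problem is not enough, because an individual projection raises one coordinate even as it lowers another, so the running maximum of the active sub-configuration need not be monotone. I would attempt to strengthen the invariant to track the behaviour of that running maximum, and — critically — to allow the order of the sub-problem projections to adapt to the current iterate rather than being fixed in advance, so that the slack created when $x_n$ was made maximal is never consumed. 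Controlling this two-sided coordinate movement so that the peeled-off constraints stay satisfied while the remaining constraints are resolved is the delicate point on which the whole argument turns, and is where I would expect to spend essentially all of the effort.
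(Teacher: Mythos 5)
Your decomposition is exactly the paper's: spend $n-1$ projections making the top coordinate maximal, then recurse on the remaining $n-1$ coordinates, for a total of $\binom{n}{2}=N$ projections. The difference is that you stop precisely where the paper's own proof stops justifying itself, and the preservation problem you isolate is not a deferrable verification but the whole content of the claim; since your proposal leaves it open ("where I would expect to spend essentially all of the effort"), it is incomplete as a proof.

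Worse, the obstruction you identify is real and cannot be closed for $S$ as defined. A firing projection onto $x_i - x_j \le -\varepsilon$ sends $x_j$ to $(x_i + x_j + \varepsilon)/2$, which can exceed $\max(x_i,x_j)$ by up to $\varepsilon/2$; propagating this through the recursion, the coordinate fixed at each stage is only guaranteed to end up \emph{strictly} below the coordinates fixed earlier, not $\varepsilon$ below them. Concretely, for $n=3$, $\varepsilon=1$, $x^0=(0,0,0)$, the prescribed order $(1,3),(2,3),(1,2)$ produces $(-7/8,\,1/8,\,3/4)$, which violates $x_2 - x_3 \le -1$ and so is not in $S$; moreover, since every firing projection lands exactly on a hyperplane $x_i - x_j = -\varepsilon$, one can enumerate the handful of states reachable in two steps from this $x^0$ and check that no third projection can land in $S$ at all, so no strengthened invariant or adaptive reordering of the sub-problem rescues the statement. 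What your argument (and the paper's) does prove is the lemma with $S$ replaced by the open order cone of correctly ranked vectors: the invariant to carry is that each already-fixed coordinate stays strictly above everything still in play, which follows from the bound that the raised coordinate moves to $(x_i + x_j + \varepsilon)/2 \le x_k - \delta/2$ whenever $x_i \le x_k - \varepsilon$ and $x_j \le x_k - \delta$ beforehand. If the conclusion $P_{i_N}\cdots P_{i_1}x^0 \in S$ is genuinely needed downstream, the dynamics themselves (e.g., over-relaxed projections) would have to change, not merely the order of the projections.
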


\begin{proof}
Given any initial iterate $x^0$, projecting onto the $n-1$ equations formed by the comparisons $x_1 < x_n, x_2 < x_n, ..., x_{n-1} < x_n$ will ensure that the $n$\textsuperscript{th} coordinate of $x^{n-1}$ is the largest. One may then project onto $x_1 < x_{n-1}, ..., x_{n-2} < x_{n-1}$ in sequence to ensure the $(n-1)$\textsuperscript{th} coordinate of the resulting iterate is the second largest. Continuing in this fashion ensures that the full ranking is recovered (i.e., the iterate is in $S$) after
\[
\sum_{i=1}^{n-1} i = {n \choose 2} =: N
\]
projections.
\end{proof}

We follow this with a second lemma, which states that we may recover exponential-type bounds on the tail probabilities for the time taken for the iterates produced by KaczRank to reach the feasible region. This is a modification of (\cite{Bullo2020Conv}, Lemma 5).

\begin{lemma}\label{lem:control_tail_probs}
Let $(x^t)_{t=0}^{\infty}$ be the iterates produced by applying KaczRank to the system $Qx \leq -\varepsilon$ formed from all pairwise comparisons, with initial iterate $x^0$. Let $\tau = \inf_{t \geq 0}\{t : x^t \in S\}$. Then for any $k \geq 0$, 
\[
\mathbb{P}(\tau \geq k) \leq \left(1 - \frac{2^N}{n^N(n-1)^N}\right)^{\left\lfloor\frac{k}{N+1}\right\rfloor}.
\]
\end{lemma}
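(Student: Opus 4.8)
The plan is to treat $S$ as an absorbing set for the KaczRank dynamics and to run a renewal/geometric tail estimate over disjoint time-blocks. First I would record the absorption property: if $x^t \in S$ then every comparison constraint $\langle \varphi, x^t\rangle + \varepsilon \leq 0$ is already satisfied, so the conditional update in \cref{alg:kaczrank} never fires and $x^s = x^t$ for all $s \geq t$. Consequently the hitting time is monotone in the sense that $\{\tau \geq k\}$ equals the event that the iterate has failed to enter $S$ by step $k-1$, and it suffices to bound the probability that the iterate has failed to reach $S$ by the end of each of several consecutive blocks.

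Second, I would establish a uniform, per-block lower bound on the probability of reaching $S$. By \cref{lem:n_step_guarantee} there is a \emph{fixed} list of $N$ comparisons $\varphi_{i_1}, \dots, \varphi_{i_N}$ (the one exhibited in its proof, which does not depend on the starting iterate) whose successive projections carry \emph{any} point into $S$. Since in the online model each comparison is drawn independently and uniformly from the $m = \binom{n}{2}$ rows of $Q$, the probability that $N$ consecutive draws coincide with this exact list, in order, is $m^{-N} = \left(\tfrac{2}{n(n-1)}\right)^{N} = \tfrac{2^N}{n^N(n-1)^N} =: p$. On this event the corresponding window drives the iterate into $S$ regardless of where it began, so conditioned on the entire history up to the start of any such window, the probability of \emph{not} being in $S$ at the window's end is at most $1-p$.

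Third, I would partition $\{1, \dots, k\}$ into $\lfloor k/(N+1)\rfloor$ disjoint blocks, each of length $N+1$, and within each block reserve a window of $N$ consecutive steps for the driving sequence (the single step of slack keeps the windows cleanly separated and matches the structure of \cite{Bullo2020Conv}, Lemma 5; a tighter accounting with blocks of length $N$ is also possible). The samples used in distinct blocks are independent, and the success event within any block forces $\tau$ to be no larger than the end of that block; hence $\{\tau \geq k\}$ is contained in the intersection of the per-block failure events. Telescoping the conditional bounds of the previous step via the tower property over the $\lfloor k/(N+1)\rfloor$ complete blocks then yields $\mathbb{P}(\tau \geq k) \leq (1-p)^{\lfloor k/(N+1)\rfloor}$, which is the claim.

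The main obstacle I anticipate is the reconciliation between the deterministic projections supplied by \cref{lem:n_step_guarantee} and the \emph{conditional} updates actually performed by KaczRank (which skip any already-satisfied constraint): I must verify that sampling the fixed comparison list genuinely lands KaczRank's conditional iterates in $S$, and in particular that constraints made tight early in the sweep are not subsequently disturbed out of $S$ by later projections on coupled coordinates. Once this consistency is confirmed the per-block probability $p = m^{-N}$ is exact, and the remaining work is the routine measure-theoretic bookkeeping of conditioning on the natural filtration to make the block-independence argument rigorous despite the iterate at each block boundary being random and correlated with the past.
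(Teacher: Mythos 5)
Your proposal is correct and follows essentially the same route as the paper's proof: invoke \cref{lem:n_step_guarantee} to get a per-block success probability of $2^N/(n^N(n-1)^N)$ over windows of length $N+1$, then chain the conditional failure probabilities across $\lfloor k/(N+1)\rfloor$ blocks and round down with the floor. The consistency issue you flag at the end (conditional updates versus unconditional projections) is harmless, since skipping an already-satisfied constraint leaves the iterate in the correct halfspace, and the ordering in \cref{lem:n_step_guarantee} already guarantees later projections do not undo earlier ones.
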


\begin{proof}
By \cref{lem:n_step_guarantee}, for any $l \geq 0$, there exists a sequence of projections $P_{i_l}, P_{i_{l+1}}, ..., P_{i_{l+N-1}}$ such that $P_{i_{l+n-1}}...P_{i_l}x^l \in S$. Thus
\begin{align*}
    \mathbb{P}(\{S \text{ is reached in } [l, l+N]\} | x^l) & \geq \mathbb{P}\left(\cap_{s=l}^{l+N-1}\{\text{row $i_s$ is selected at iteration $s$}\}|x^l\right) \\
    &= \prod_{s=l}^{l+N-1}\mathbb{P}(\{\text{row $i_s$ is selected at iteration $s$}\}|x^l) \\
    &= \frac{2^N}{n^N(n-1)^N}.
\end{align*}
Now let $E_l$ be the event that $S$ is reached in $[l, l+N]$. Then for any $M > 0$, we have

\begin{align*}
    \mathbb{P}(\tau \geq (N+1)M) &= \mathbb{P}\left(\cap_{m=0}^{M-1} E_{m(N+1)}^c\right) \\
    &= \mathbb{P}(E_0^c)\prod_{m=1}^{M-1}\mathbb{P}\left(E_{m(N+1)}^c | \cap_{0 \leq m' < m} E_{m'(N+1)}^c \right) \\
    &\leq \left(1 - \frac{2^N}{n^N(n-1)^N}\right)^M.
\end{align*}
Thus,
\[
\mathbb{P}(\tau \geq k) \leq \mathbb{P}\left(\tau \geq \left\lfloor \frac{k}{N+1}\right\rfloor (N+1)\right) \leq \left(1 - \frac{2^N}{n^N (n-1)^{N}}\right)^{\left\lfloor \frac{k}{N+1}\right\rfloor}.
\]
\end{proof}

We are then able to state our main theorem for this section, which states that the iterates of KaczRank reach the feasible region in finite time almost surely, and gives an upper bound on the expected number of iterations required.

\begin{theorem}\label{thm:main_no_noise}
Let $(x^t)_{t=0}^{\infty}$ be the iterates produced by applying KaczRank to the system $Qx \leq -\varepsilon$ formed from all pairwise comparisons, with initial iterate $x^0$. Let $\tau = \inf_{t \geq 0}\{t : x^t \in S\}$. Then
\begin{enumerate}
    \item $\mathbb{P}(\tau < \infty) = 1$, and
    \item $\mathbb{E}(\tau) \leq \frac{(N+1)n^N(n-1)^N}{2^N}$.
\end{enumerate}
\end{theorem}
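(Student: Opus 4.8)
The plan is to derive both conclusions directly from the tail-probability bound established in \cref{lem:control_tail_probs}, which does all the heavy lifting. The key observation is that \cref{lem:control_tail_probs} gives us a geometric-type decay of $\mathbb{P}(\tau \geq k)$ in the floored exponent $\lfloor k/(N+1)\rfloor$, and once we have geometric decay of the tail, finiteness almost surely and a bound on the expectation both follow from standard summation arguments.

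For part (1), I would argue that $\mathbb{P}(\tau = \infty) = \lim_{k \to \infty} \mathbb{P}(\tau \geq k)$, since the events $\{\tau \geq k\}$ are nested decreasing and their intersection is $\{\tau = \infty\}$. Writing $q := 1 - \frac{2^N}{n^N(n-1)^N} \in [0,1)$, the bound from \cref{lem:control_tail_probs} reads $\mathbb{P}(\tau \geq k) \leq q^{\lfloor k/(N+1)\rfloor}$. Since $q < 1$ and $\lfloor k/(N+1)\rfloor \to \infty$ as $k \to \infty$, the right-hand side tends to $0$, so $\mathbb{P}(\tau = \infty) = 0$ and hence $\mathbb{P}(\tau < \infty) = 1$.

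For part (2), I would use the standard identity $\mathbb{E}(\tau) = \sum_{k=0}^{\infty} \mathbb{P}(\tau \geq k+1) \leq \sum_{k=0}^{\infty} \mathbb{P}(\tau \geq k)$ for a nonnegative integer-valued random variable (more carefully, $\mathbb{E}(\tau) = \sum_{k \geq 1} \mathbb{P}(\tau \geq k)$, and I would just bound $\sum_{k \geq 0}\mathbb{P}(\tau \geq k)$ from above). Substituting the tail bound gives
\begin{equation*}
    \mathbb{E}(\tau) \leq \sum_{k=0}^{\infty} q^{\lfloor k/(N+1)\rfloor}.
\end{equation*}
The summand is constant on each block of $N+1$ consecutive values of $k$ (namely on $k \in \{j(N+1), \dots, (j+1)(N+1)-1\}$ the floor equals $j$), so the sum regroups as $(N+1)\sum_{j=0}^{\infty} q^{j} = \frac{N+1}{1-q}$. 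Finally, $1 - q = \frac{2^N}{n^N(n-1)^N}$, so $\frac{N+1}{1-q} = \frac{(N+1)n^N(n-1)^N}{2^N}$, which is exactly the claimed bound.

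I do not anticipate a genuine obstacle here, since the content is essentially bookkeeping on top of \cref{lem:control_tail_probs}; the only point requiring mild care is the regrouping of the series by blocks of length $N+1$ to match the floor function, and making sure the constant $q$ is correctly identified so that the final algebraic simplification produces the stated closed form. One should also confirm that $\tau$ is integer-valued (it is, being an infimum over nonnegative integers $t$), so that the tail-sum formula for the expectation applies verbatim.
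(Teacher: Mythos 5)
Your proposal is correct and follows essentially the same route as the paper: part (1) from the vanishing of the tail bound in \cref{lem:control_tail_probs}, and part (2) from the tail-sum formula for the expectation followed by regrouping the floored geometric series into blocks of length $N+1$. The only cosmetic difference is that you justify the regrouping more explicitly (and start the sum at $k=0$ rather than $k=1$), which if anything makes the bookkeeping cleaner.
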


\begin{proof}
Part 1 follows immediately from \cref{lem:control_tail_probs}. For part 2, we again use \cref{lem:n_step_guarantee} to obtain
\begin{align*}
    \mathbb{E}(\tau) &= \sum_{k=1}^{\infty} \mathbb{P}(\tau \geq k) \\
    &\leq \sum_{k=1}^{\infty} \left(1 - \frac{2^N}{n^N(n-1)^N}\right)^{\left\lfloor \frac{k}{N+1}\right\rfloor}\\
    &= (N+1)\sum_{k=0}^{\infty}\left(1 - \frac{2^N}{n^N(n-1)^N}\right)^k \\
    &= \frac{(N+1)n^N(n-1)^N}{2^N}.
\end{align*}
\end{proof}

\section{Inconsistent Data}
\label{sec:noisy}

In this section, we consider the scenario in which comparison data may contain \text{noise}, in the sense of some sampled comparisons being the reverse of the corresponding comparison in the underlying ranking. Precisely, we assume that for each time $t = 1,2,...$ we sample $-\varphi^t$ rather than $\varphi^t$ with probability $p \in [0,1/2)$.

It is not difficult to show, using a similar argument to the previous section, that the sequence of iterates formed by applying KaczRank to noisy comparisons will still reach the feasible region at some point. We prove this in \cref{lem:noisy_hits_feasible}.

\begin{lemma}\label{lem:noisy_hits_feasible}
Let $p \in [0,1/2)$, and let $(\varphi^t)_{t=1}^{\infty}$ be a sequence of sampled pairwise comparisons. For each $t$, define $\psi^t$ to be equal to $-\varphi^t$ with probability $p$, and equal to $\varphi^t$ with probability $1-p$. Let $(x^t)_{t=1}^{\infty}$ be the sequence of iterates produced by applying KaczRank to the sequence of observations $(\psi^t)_{t=1}^\infty$ with initial iterate $x^0$, and let $\tau^{\mathrm{noise}} = \inf_{t > 0}\{t: x^t \in S\}$. Then we have
\begin{enumerate}
    \item $\mathbb{P}(\tau^{\mathrm{noise}} < \infty) = 1$, and
    \item $\mathbb{E}(\tau^\mathrm{noise}) \leq \frac{(N+1)n^N(n-1)^N}{2^N(1-p)^N}$
\end{enumerate}
\end{lemma}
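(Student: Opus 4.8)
The plan is to adapt the two-stage argument of \cref{lem:control_tail_probs} and \cref{thm:main_no_noise} to the noisy sampling model, with the single modification that a ``good'' step now requires two independent favorable events rather than one. First I would observe that the sequence of $N$ projections furnished by \cref{lem:n_step_guarantee} consists entirely of projections onto correctly oriented hyperplanes, namely those given by the rows of $Q$ in their true orientation. Hence from any iterate $x^l$ there is a fixed list of rows $i_l, \dots, i_{l+N-1}$ whose \emph{unflipped} selection at times $l, \dots, l+N-1$ drives the iterate into $S$; in the noisy model the relevant favorable outcome at each of these steps is that $\psi^s = \varphi^{i_s}$, i.e., the correct comparison is both drawn and not reversed.

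The key step is the per-window hitting bound. At each time $s$, the algorithm performs the desired projection precisely when the base comparison indexed by $i_s$ is drawn \emph{and} it is not flipped. Since the draw is uniform over the $\binom{n}{2}$ comparisons and the flip is an independent Bernoulli$(p)$ event, this has probability $\frac{2}{n(n-1)}\cdot(1-p)$. Multiplying over the $N$ steps of the window, and using that draws and flips at distinct times are independent, gives
\[
\mathbb{P}(\{S \text{ is reached in } [l, l+N]\}\mid x^l) \geq \left(\frac{2(1-p)}{n(n-1)}\right)^N = \frac{2^N(1-p)^N}{n^N(n-1)^N}.
\]
Feeding this into the disjoint-window decomposition of \cref{lem:control_tail_probs} yields
\[
\mathbb{P}(\tau^{\mathrm{noise}} \geq k) \leq \left(1 - \frac{2^N(1-p)^N}{n^N(n-1)^N}\right)^{\left\lfloor \frac{k}{N+1}\right\rfloor}.
\]

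From here both claims follow exactly as in \cref{thm:main_no_noise}. Part~1 is immediate: since $p < 1/2$, the bracketed quantity lies strictly in $[0,1)$, so the tail probability tends to $0$ as $k \to \infty$. For part~2 I would write $\mathbb{E}(\tau^{\mathrm{noise}}) = \sum_{k \geq 1}\mathbb{P}(\tau^{\mathrm{noise}} \geq k)$, bound each summand by the tail estimate, group the $N+1$ equal terms per block, and collapse the resulting geometric series to obtain $\frac{(N+1)n^N(n-1)^N}{2^N(1-p)^N}$, matching the stated bound.

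The only genuinely delicate point is the per-window lower bound: one must verify that the good path of \cref{lem:n_step_guarantee} uses only correctly oriented comparisons, so that ``no flip'' is indeed the favorable outcome at each step, and that the row-selection and flip events factorize across the $N$ times. Note also that we only lower-bound $\mathbb{P}$ by this single path, so any additional routes to $S$ (for instance, reaching feasibility via a flipped comparison) can only help and need not be tracked. Everything else is a verbatim repetition of the noiseless argument with $\frac{2}{n(n-1)}$ replaced by $\frac{2(1-p)}{n(n-1)}$; in particular no new measurability or integrability issues arise.
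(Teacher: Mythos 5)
Your proposal is correct and follows essentially the same route as the paper: both arguments reuse \cref{lem:n_step_guarantee}, lower-bound the per-window hitting probability by requiring the $N$ correct comparisons to be drawn \emph{and} unflipped (giving the factor $(1-p)^N$), and then repeat the tail-bound and geometric-series computation of \cref{lem:control_tail_probs} and \cref{thm:main_no_noise} verbatim. Your explicit remarks on the independence of draws and flips and on the harmlessness of ignoring alternative paths to $S$ are points the paper leaves implicit, but they do not constitute a different approach.
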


\begin{proof}
Note that in the noisy case, \cref{lem:n_step_guarantee} still holds, i.e. it is always possible to reach the feasible region with $N = {n \choose 2}$ projections. Thus, a similar result to \cref{lem:control_tail_probs} holds, with the difference coming in the probability we select the $N$ necessary comparisons to reach $S$. In particular, the probability that we select these $N$ (non-noisy) comparisons is
\[
\frac{(1-p)^N 2^N}{n^N (n-1)^N},
\]
and following similar logic to \cref{lem:control_tail_probs} we have that for any $k \geq 0$,
\[
\mathbb{P}(\tau^{\mathrm{noise}} \geq k) \leq \left(1 - \frac{(1-p)^N 2^N}{n^N (n-1)^N}\right)^{\left\lfloor \frac{k}{N+1}\right\rfloor}.
\]
Now, (1) is immediate, and for (2), we have
\begin{align*}
    \mathbb{E}(\tau^{\mathrm{noise}}) &= \sum_{k=1}^{\infty} \mathbb{P}(\tau^{\mathrm{noise}} \geq k) \\
    &\leq \sum_{k=1}^{\infty} \left(1 - \frac{(1-p)^N 2^N}{n^N (n-1)^N}\right)^{\left\lfloor \frac{k}{N+1}\right\rfloor} \\
    &= \frac{(N+1)n^N(n-1)^N}{2^N(1-p)^N}.
\end{align*}
\end{proof}

Whilst the sequence of iterates is guaranteed to hit the feasible region, it is not guaranteed to stay there. If, say, $x^T \in S$ but $\psi^{T+1} = -\varphi^{T+1}$, then $x^{T+1} \notin S$. To minimise the effect of this, we introduce a variant of \cref{alg:kaczrank} designed to minimise how far away from $S$ (in terms of Hamming distance between rankings) our iterates may become. This method, which we call CautiousRank, proceeds similarly to KaczRank, but will project onto a sample comparison only if both the residual is positive and the Hamming distance between the current iterate and its projection is smaller than some cautiousness parameter $\alpha$. This slows convergence (since when iterates are far from $S$, it could be beneficial to project onto non-noisy comparisons that would induce a large such Hamming distance), but ensures that when iterates are near $S$, they will not wander too far away. We give the method in full in \cref{alg:cautiousrank}.

\begin{algorithm}
\caption{CautiousRank}\label{alg:cautiousrank}
\begin{algorithmic}[1]
\Procedure{CautiousRank($\alpha$) }{Input: initial iterate $x^0$, comparisons $\{(\varphi^t, -\varepsilon)\}_{t=1}^{T}$}
\For{$t = 1, 2, \dots, T$}
\State{Compute $r^t = \langle \varphi^t, x^{t-1} \rangle + \varepsilon$}
\State{Compute $y^t = x^{t-1} - \frac{\langle \varphi^t, x^{t-1} \rangle + \varepsilon}{\norm{\varphi^t}^2}\varphi^t$}
\If{ $r^t > 0$ and $d_H(\operatorname{ranking}(x^t), \operatorname{ranking}(y^t)) < \alpha$}
\State{Update $x^{t} = y^t$}
\Else
\State{$x^t = x^{t-1}$}
\EndIf
\EndFor{}

\Return{$\operatorname{ranking}(x^T)$}
\EndProcedure{}
\end{algorithmic}
\end{algorithm}

\section{Implementation and Experiments}
\label{sec:exps}

\subsection{Relaxation and The Role of $\varepsilon$}
\label{sec:stepsizes}

As discussed in \cref{sec:background}, assembling a collection of pairwise comparisons into a feasibility problem gives rise to a strict system of the form $Qx < 0$. Applying KaczRank to the system directly will (eventually) yield iterates satisfying $Qx = 0$, as projections are made onto the hyperplanes defined by taking the constraints as equalities. To obtain iterates that satisfy the strict inequalities, we introduce some slack in the form of $-\varepsilon$ on the right-hand side, and we note that taking any $\varepsilon > 0$ will suffice: this is the case because our methods seek to recover ranks, rather than any underlying score vector.

An alternative methodology would be to add a relaxation parameter into the update for KaczRank, so that if an unsatisfied constraint is selected, the method updates $x^{t-1}$ to
\[
x^{t} = x^{t-1} - \omega \frac{\langle \varphi^t, x^{t-1}\rangle}{\norm{\varphi^t}^2}\varphi^t,
\]
for some relaxation parameter $\omega \in (1,2)$. This will ensure that the selected constraint holds strictly, and the convergence analysis may be performed in a similar way (in particular, it is known that randomized Kaczmarz converges for relaxation parameters $\omega \in (0,2)$, see e.g. \cite{Necoara2019Faster}). In either case, the objective is to project slightly beyond the chosen hyperplane into the feasible halfspace for that observation. As there is no theoretical difference between the two methodologies, we choose the slack variable approach for intuitive clarity.

\subsection{Experimental Results}
\label{sec:experimentalresults}
In this section, we display a series of experimental results using KaczRank and CautiousRank to solve for an underlying ranking. We measure the accuracy of the estimated ranking using two approaches. The first is simply the Hamming distance between the true ranking and the estimated ranking, defined as the number of locations in which the two rankings are different. Note that even nearly perfect rankings can be far from the true ranking under the Hamming distance (e.g., if the estimated ranking is a shift of the underlying ranking). For this reason, we also utilize the $k$-distance, which is equal to the number of items that are not within $k$ places of their true location. This is a natural generalization of the Hamming distance that, depending on the choice of $k$, tolerates some muddling of the true ranking.

\begin{figure}
    \centering
    \includegraphics[width=3in]{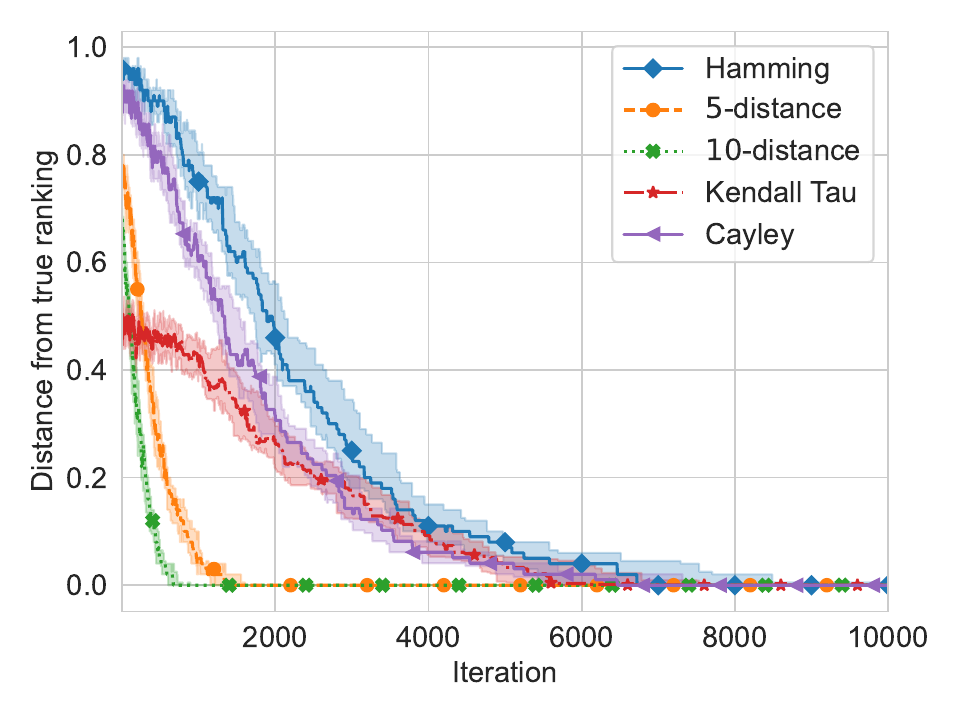}
    \caption{KaczRank run for 10000 iterations on a set of full observations corresponding to a ranking of $n = 50$ objects. We plot the normalized distances at each iteration for the Hamming, $5-$, $10-$, Kendall tau, and Cayley distances.}
    \label{fig:all_dists}
\end{figure}

We note that there are other distances used within the literature: for example, the Kendall tau distance (also known as the bubble-sort distance) computes the number of neighbourly transpositions required to permute one ranking into another, and the Cayley distance computes the number of transpositions required to permute one ranking into another. In \cref{fig:all_dists}, we show an example of the convergence of KaczRank under these distances, alongside the Hamming, $5-$, and $10-$distances for a collection of $50$ objects. To aid the visual comparison, we normalize each distance to take values between $0$ and $1$. We see that KaczRank converges under all distances, and for the remainder of this section, we restrict our attention to the Hamming and $k-$distances.

In all experiments, we mark the median across trials with the interquartile range shaded. Experiments were run for $20$ trials with $\varepsilon = 10^{-5}$ unless otherwise noted. Our code is available at \href{https://github.com/alexandersietsema/KaczRank}{https://github.com/alexandersietsema/KaczRank}.

\subsubsection{Consistent Data}
\label{sec:consistent}
First, we consider the case where comparisons are sampled from the full set of possible pairings, and every sampled comparison respects the underlying ranking (i.e., there is no noise). \cref{fig:vanilla} showcases the behaviour of the KaczRank method on such a system with $50$ objects, in terms of the Hamming and $k$-distance. We observe convergence to the true ranking under all notions of distance, where the rate of convergence is greater for higher values of $k$. This is to be expected as the $k$-distance is a relaxation of the Hamming distance, with the relaxation being greater for higher values of $k$.

\begin{figure}
    \centering
    \includegraphics[width=2in]{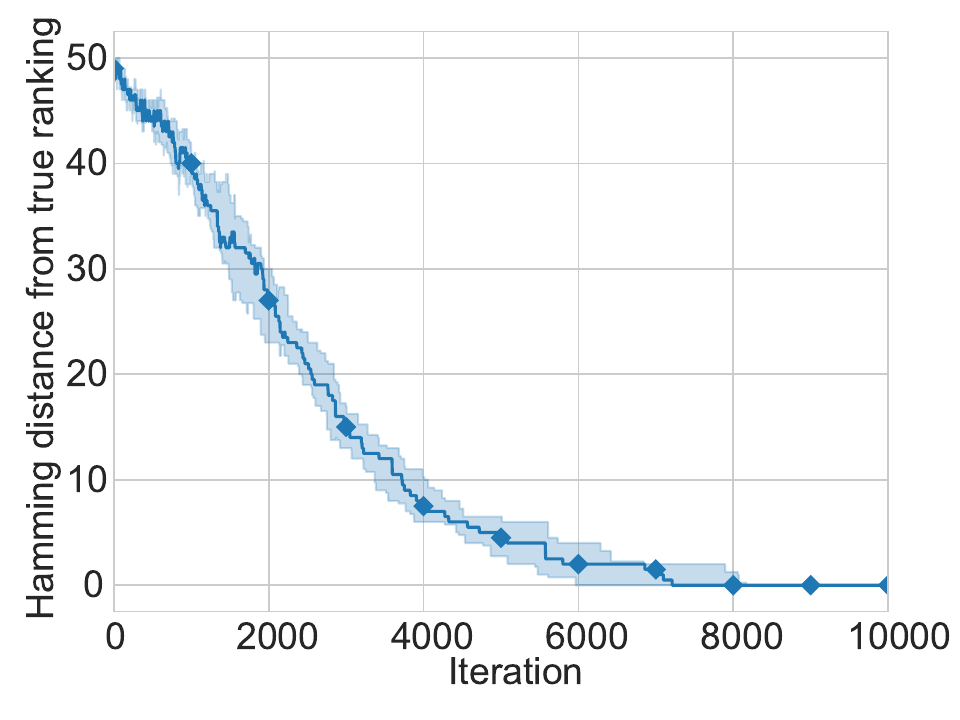} \quad \includegraphics[width=2in]{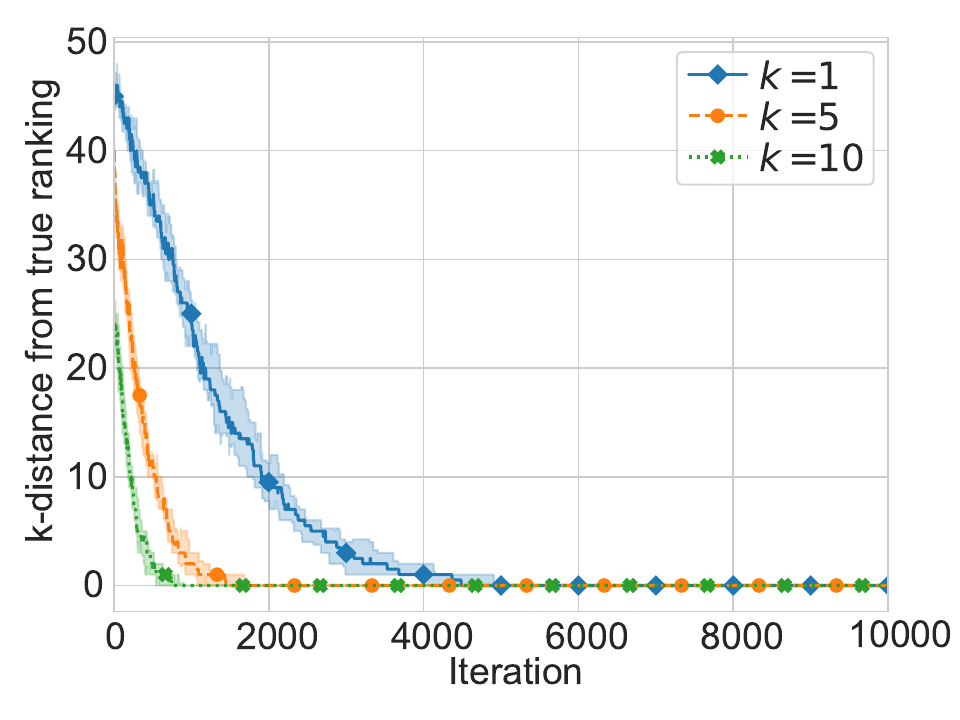}
    \caption{KaczRank run on a set of full observations corresponding to a ranking of $n=50$ objects for 10000 iterations. Left: Hamming distance versus iteration. Right: $k$-distance versus iteration for $k=1,5,10$.}
    \label{fig:vanilla}
\end{figure}

\cref{fig:vanillapartial} demonstrates the performance of KaczRank in the case where comparisons are sampled from a subset of the full set of possible pairings. That is, for $q \in (0,1]$, $\left\lfloor q{50 \choose 2}\right\rfloor$ comparisons are chosen uniformly from the full set of ${50 \choose 2}$ comparisons, and at each iteration of our methods, a comparison is sampled from this subset. We compare the Hamming and $k$-distances between our iterate and the true ranking after $10000$ iterations, for $q$ ranging from $0.05$ to $1$. We see that the true ranking is unlikely to be recoverable unless $q=1$, but that the $k$-distance is more robust to this form of incomplete data. For example, we see that only around half of the data is needed to obtain a ranking where each object is within $5$ spots of its true rank. 

\begin{figure}
    \centering
    \includegraphics[width=2in]{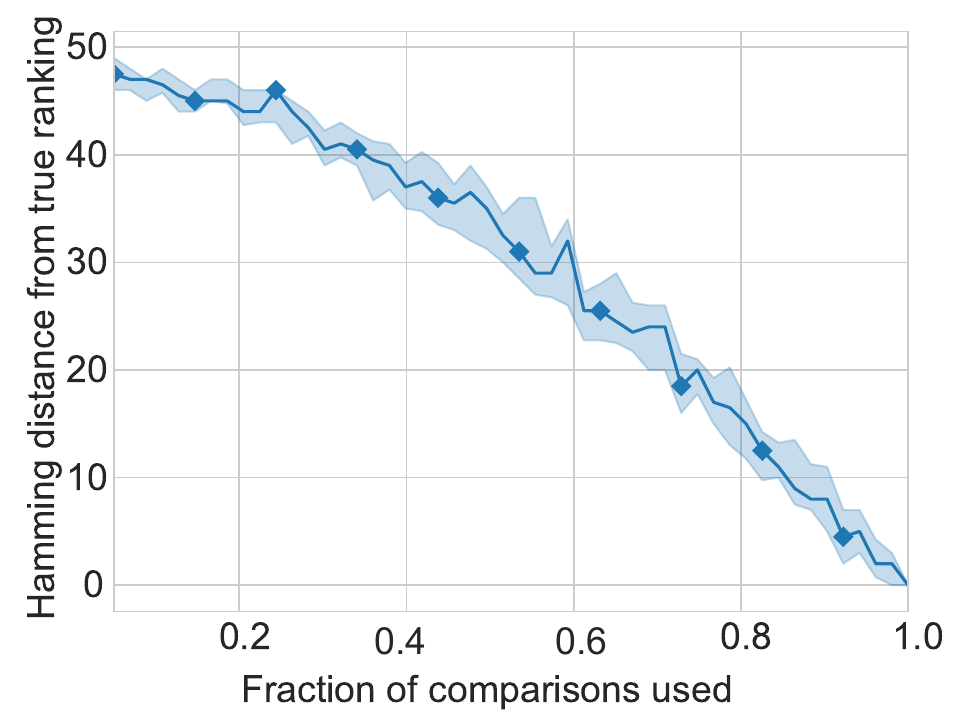} \quad \includegraphics[width=2in]{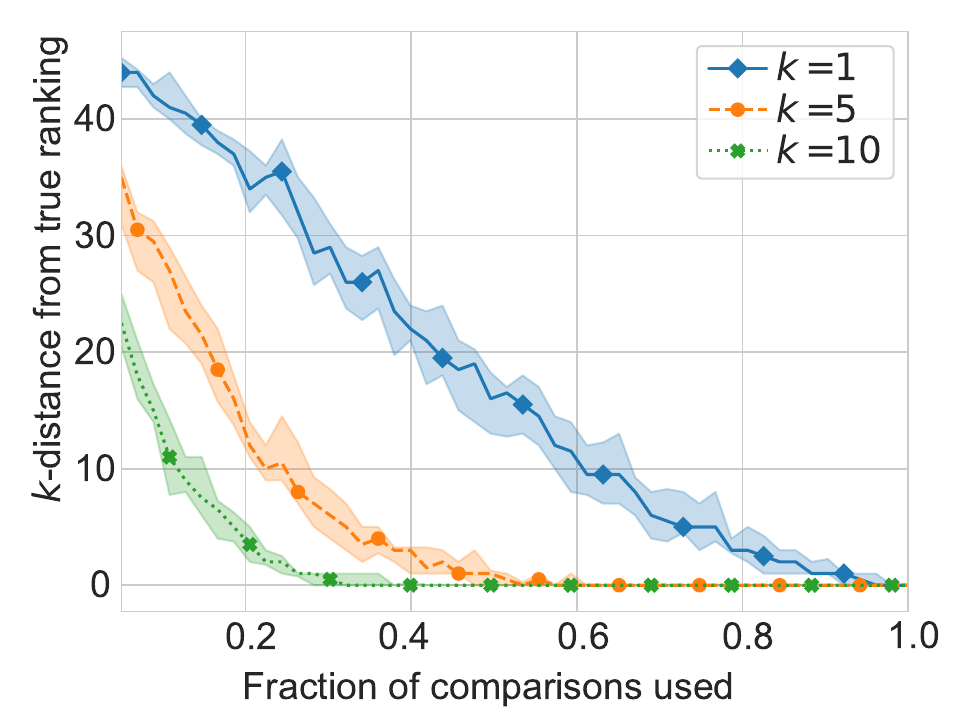}
    \caption{KaczRank run on a subset of the full set of pairwise comparisons corresponding to a ranking of $n=50$ objects, constructed by sampling a fraction $q \in (0,1]$ of the full set uniformly at random without replacement. Left: Hamming distance after $10000$ iterations versus $q$. Right: $k$-distance after $10000$ iterations versus $q$ with $k=1,5,10$.}
    \label{fig:vanillapartial}
\end{figure}

Our main theoretical result, \cref{thm:main_no_noise}, gives a bound on the expected number of iterations for KaczRank applied to a full set of comparisons of $n$ objects to converge, of the form $n^{\mathcal{O}(n^2)}$. In practice, however, the expected number of iterations grows far slower. In \cref{fig:n_iters}, we plot the number of iterations required to obtain the true ranking versus $n\in \{5,10,20,50,100,200\}$, and with the aid of the log-log scale we see that growth is closer to $n^{\mathcal{O}(1)}$.

\begin{figure}
    \centering
    \includegraphics[width=3in]{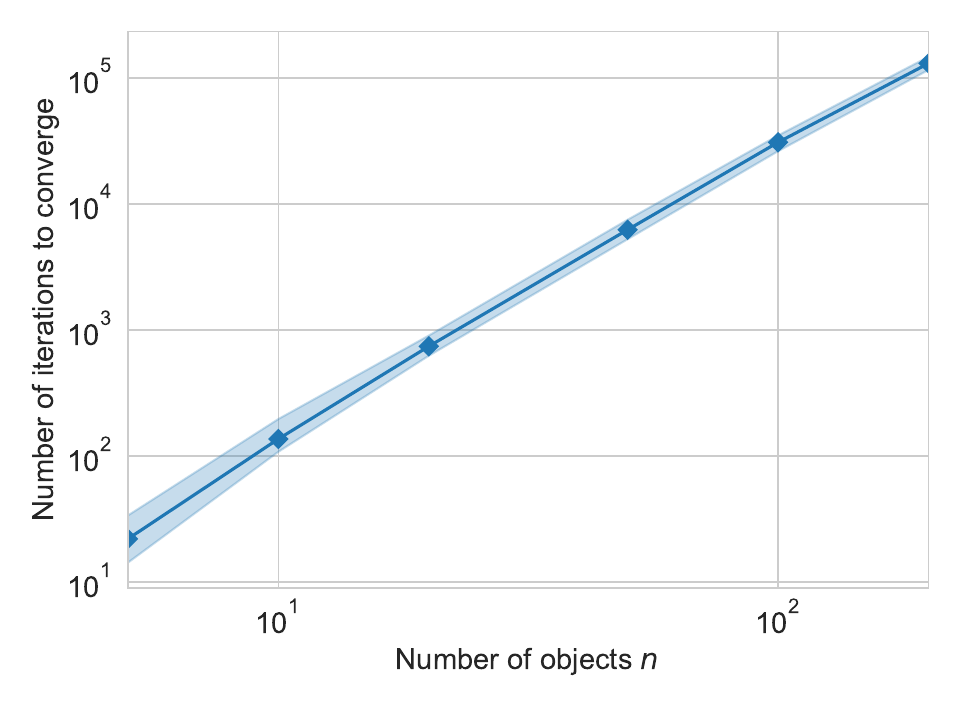}
    \caption{KaczRank run on a set of full observations corresponding to a ranking of $n$ objects for $n \in [5, 200]$ across $50$ trials. We plot the number of iterations of KaczRank required to converge to the true ranking versus the number of objects $n$, on a log-log scale.}
    \label{fig:n_iters}
\end{figure}

\subsubsection{Inconsistent Data}
\label{sec:inconsistent}
In the next set of experiments, we consider inconsistent observations as described in \cref{sec:noisy}, where at each iteration the sampled pairwise comparison is reversed with probability $p \in [0,1/2)$ (that is, if the true ranking has object $i$ ranked higher than object $j$, with probability $p$ we will actually sample the incorrect comparison $j > i$). We assume these flips occur independently across iterations. We begin by offering some insights into the choice of cautiousness parameter for our method designed for this setting, CautiousRank, as detailed in \cref{alg:cautiousrank}. In \cref{fig:alphatuning} we show the Hamming distance between the iterate produced by CautiousRank and the true ranking after $10000$ iterations, for a range of cautiousness parameters $\alpha$ and flip probabilities $p$. It is apparent that if $\alpha$ is set to be too small, the method can make no progress towards the true ranking. On the other hand, if $\alpha$ is too large the method approaches KaczRank and also fails to come close to the true ranking. However, there is a range of $\alpha$ that enables CautiousRank to outperform KaczRank, and our plots suggest that the optimal $\alpha$ does not depend on the flip probability $p$.

\begin{figure}
    \centering
    \includegraphics[width=2in]{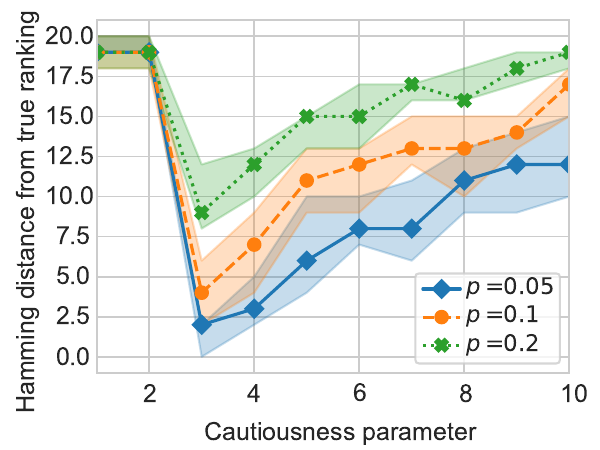} \quad \includegraphics[width=2in]{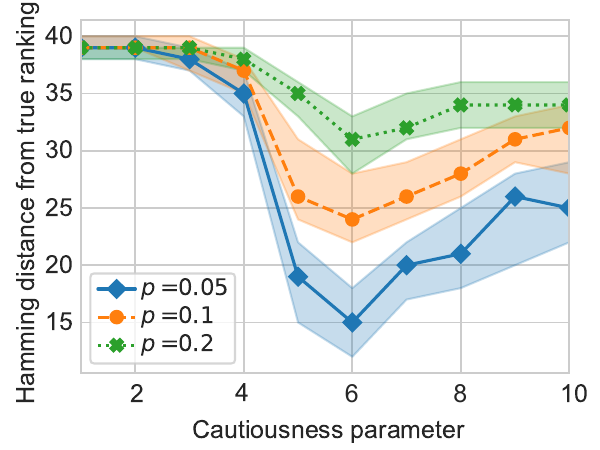}
    \caption{CautiousRank run on the full set of pairwise comparisons corresponding to a ranking of $n=20$ (left) and $n=40$ (right) objects, for a range of cautiousness parameters $\alpha$ across $25$ trials. We plot the Hamming distance from the true ranking after $10000$ iterations versus $\alpha$, for a range of flip probabilities $p$.}
    \label{fig:alphatuning}
\end{figure}

For $n=20$ objects, we first consider the full information case, in which any of the ${20 \choose 2}$ comparisons may be sampled at any iteration.In \cref{fig:noisyhamming} we show the effect of varying $p$ between $0$ and $0.3$ on the Hamming distance between the KaczRank/CautiousRank iterate and the true ranking after $10000$ iterations. We see that as soon as any flipped observations are introduced, the performance of KaczRank breaks down and the true ranking is not recoverable. However, CautiousRank (with $\alpha = 4$) is more robust to these noisy samples, and outputs iterates that are relatively close to the true ranking. This is confirmed when looking at the $k$-distance plots in \cref{fig:noisykdist}: we see that CautiousRank is able to return a ranking where every element is within $5$ spots of its true position even for very noisy data. 

Lastly, we apply CautiousRank in a setting in which we have access only to a subset of the full set of comparisons, and also in which each sampled comparison has some probability of being flipped. \cref{fig:FFA} shows the results of applying CautiousRank with $\alpha = 4$ to a system formed from $n = 20$ objects, with flipping probabilities $p = 0.05$ (bottom row) and $0.1$ (top row), where we vary the proportion of available comparisons. We see that, as in the consistent setting, one still requires access to a large majority of the total comparisons in order to obtain a ranking close to the true ranking under the Hamming distance. 
We observe also that (as is to be expected), as the flipping probability $p$ increases, the quality of the ranking produced by CautiousRank decreases across all metrics.

\begin{figure}
    \centering
    \includegraphics[width=3in]{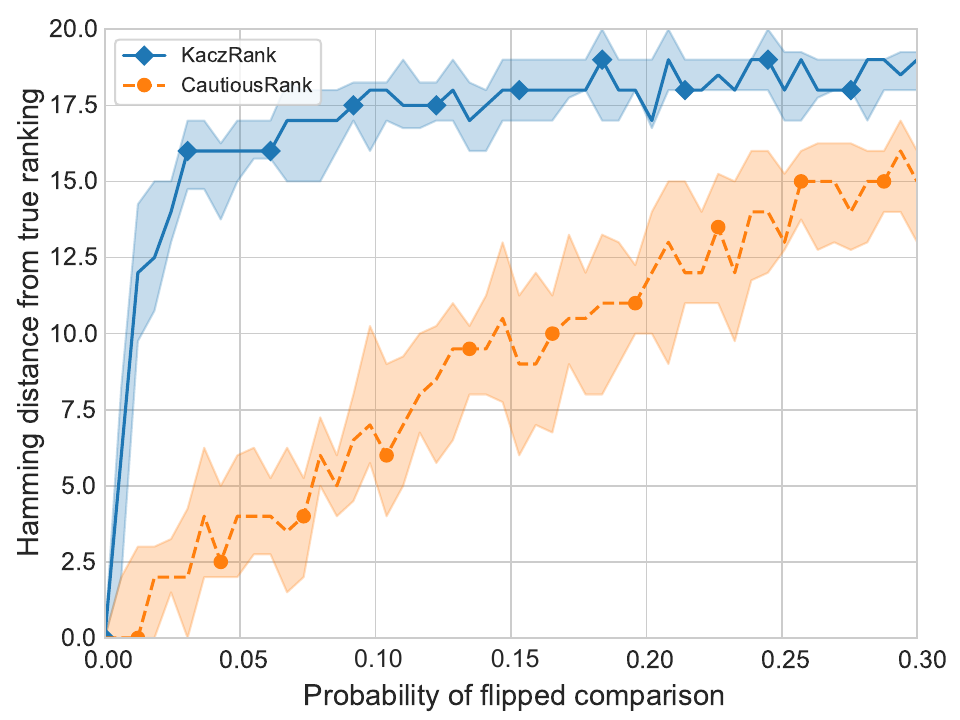}
    \caption{Both approaches run for $10000$ iterations versus $p$ on a set of full observations corresponding to a ranking of $n=20$ objects, where each sampled comparison has some probability $p$ of being flipped.}
    \label{fig:noisyhamming}
\end{figure}

\begin{figure}
    \centering
    \includegraphics[width=2in]{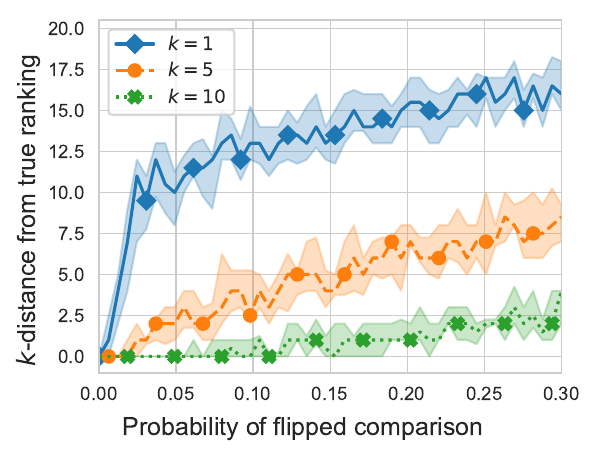} \quad \includegraphics[width=2in]{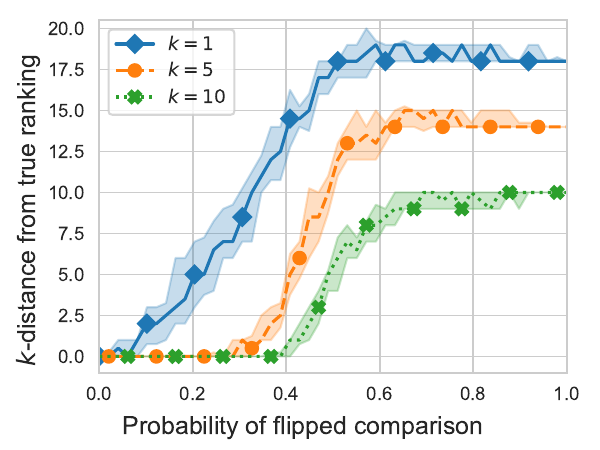}
    \caption{Both approaches run on a set of full observations corresponding to a ranking of $n=20$ objects, where each sampled comparison has some probability $p$ of being flipped. Left: $k$-distance after $10000$ iterations of KaczRank, versus $p$, with $k=1,5,10$. Right: $k$-distance after $10000$ iterations of CautiousRank, versus $p$, with $k=1,5,10$. Note the wider range of $p$ on the right, to show the extent of the robustness of CautiousRank.}
    \label{fig:noisykdist}
\end{figure}

\begin{figure}
    \centering
    \includegraphics[width=2in]{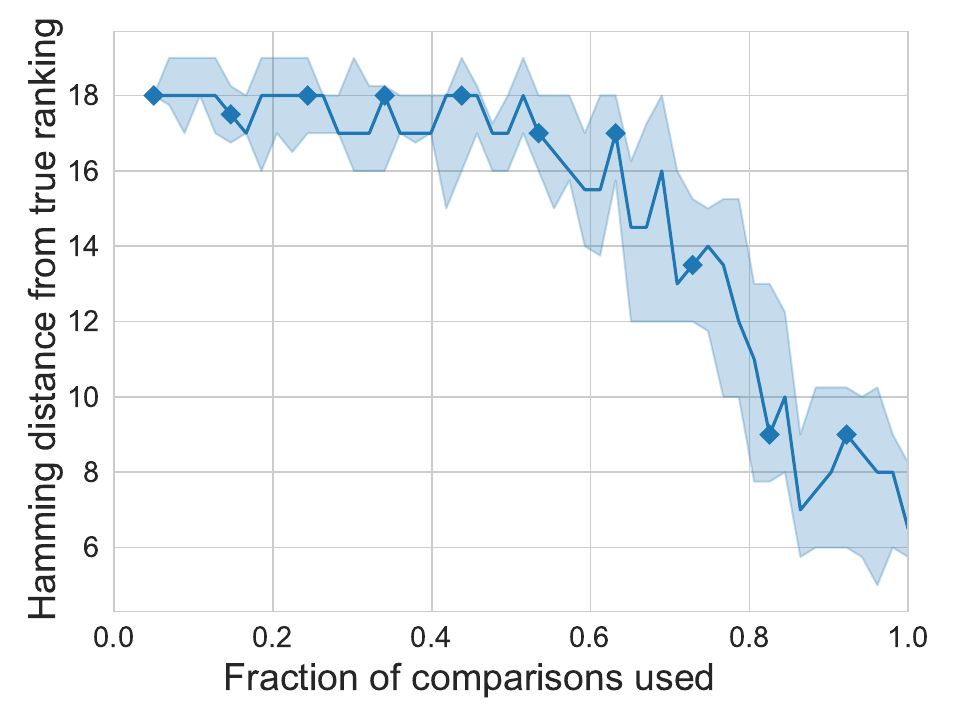} \quad \includegraphics[width=2in]{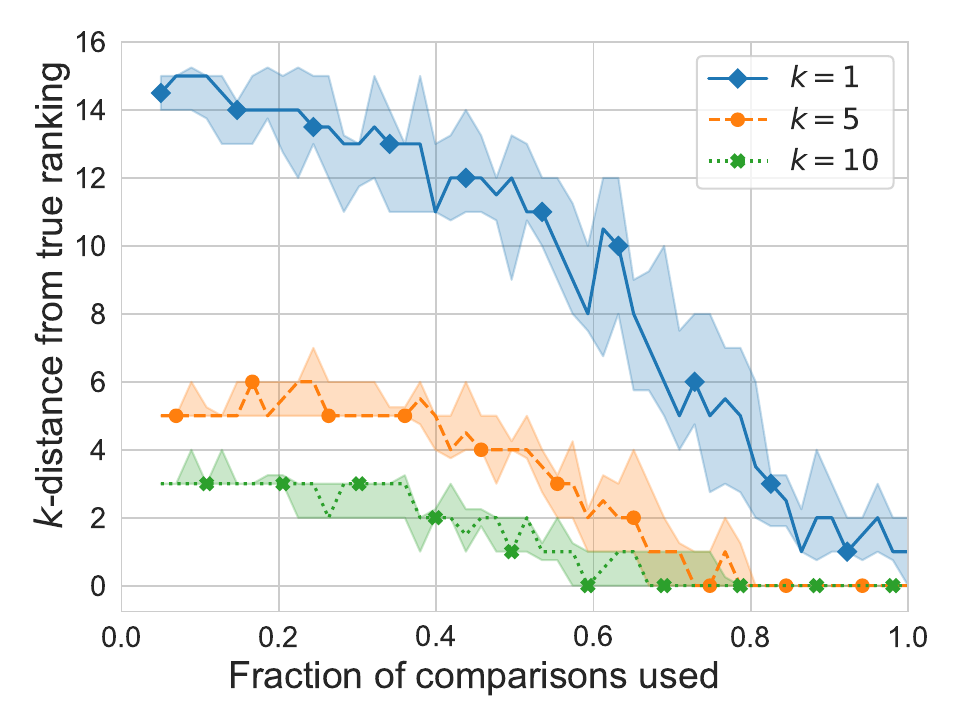} \\ \includegraphics[width=2in]{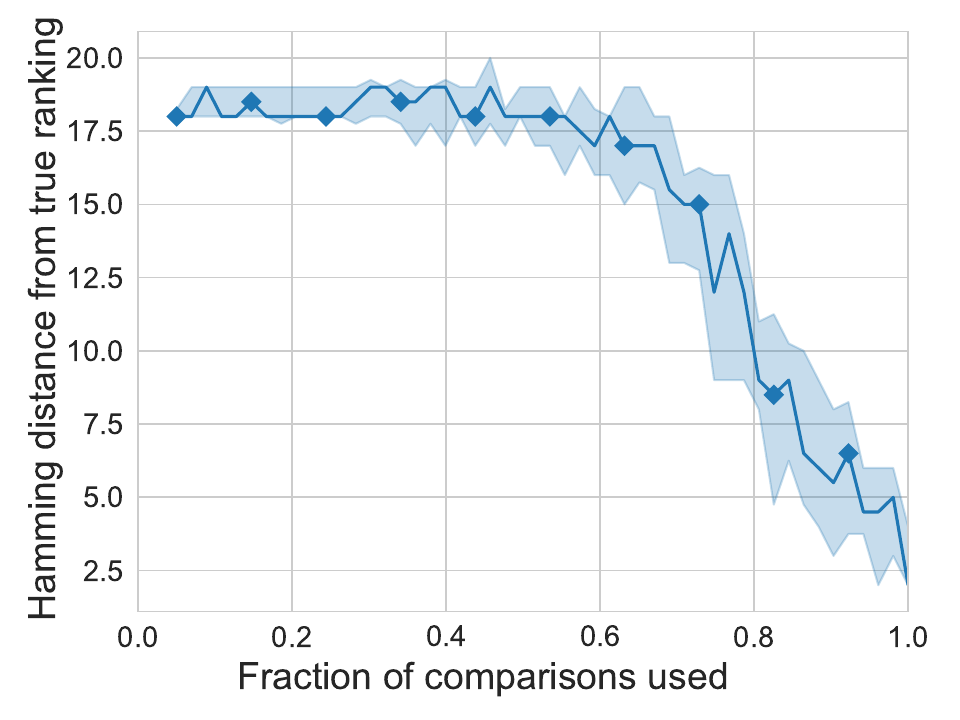} \quad 
    \includegraphics[width=2in]{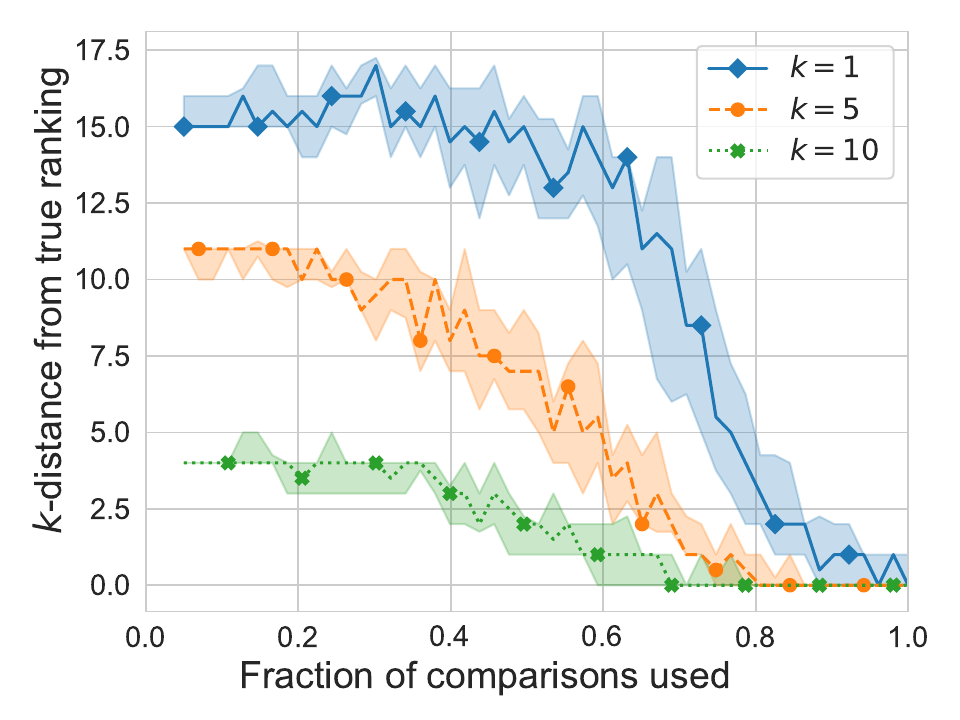}
    \caption{CautiousRank run on a subset of the full set of pairwise comparisons corresponding to a ranking of $n=20$ objects, constructed by a sampling a fraction $q \in (0,1]$ of the full set uniformly at random without replacement, where each comparison has probability $p = 0.1$ (top row) or $p = 0.05$ (bottom row) of being flipped. The Hamming (left column) and $k$-distance (right column) between the CautiousRank iterate and the true ranking after $10000$ iterations are plotted versus $q$.}
    \label{fig:FFA}
\end{figure}

\subsection{Comparisons To Other Methods}
\label{sec:comparisons}
In this section, we compare the computational time and memory usage of KaczRank to other pairwise comparison algorithms with full consistent data. In particular, we compare to the Luce Spectral Ranking (LSR) and iterative Luce Spectral Ranking algorithms \cite{maystre2015fast} as well as the similar Rank Centrality algorithm \cite{negahban2012iterative}. These methods are both popular and have readily available implementations provided by the \texttt{choix} Python package \cite{choix}. Experiments were run on a computer with an Intel i7-7700HQ processor running at 2.80 GHz using 16 GB of RAM. Memory usage was recorded using \texttt{tracemalloc} standard library package. Note that both the time and memory usage values are approximate, though the observed trends are clear and match theoretical expectations.

\begin{figure}
    \centering
    \includegraphics[width=2in]{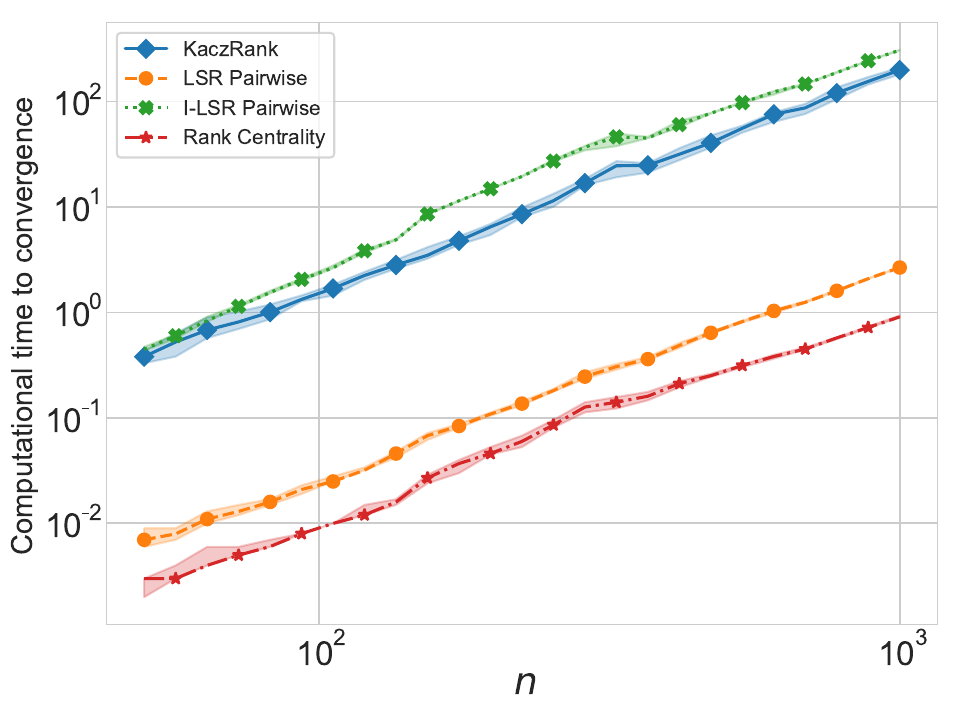} \quad \includegraphics[width=2in]{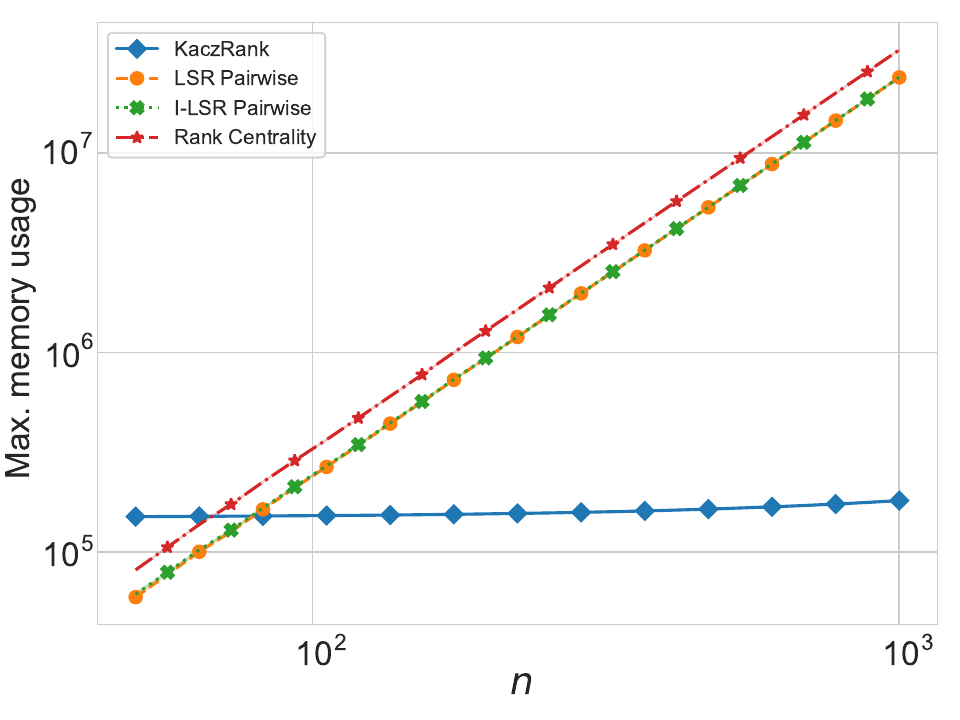}\textbf{}
    \caption{KaczRank compared to LSR, I-LSR, and Rank Centrality on a set of full observations across 25 trials on a log-log scale. Left: average computational time for convergence versus the number of objects $n$. Left: average memory usage versus the number of objects $n$.}
    \label{fig:comparison}
\end{figure}

In \cref{fig:comparison} we compare the average computational time and average approximate maximum memory usage across 25 trials for each algorithm as the number of objects $n$ increases. KaczRank was iterated until convergence to the true ranking. We note that because the comparison algorithms require that each object have a transitive win over every other object, we provide a small amount of regularization to each as provided by the \texttt{choix} package to allow for convergence to the true ranking. We see that KaczRank is roughly two orders of magnitude slower in computational time compared to LSR and Rank Centrality, but has significantly smaller local memory costs than any of the other algorithms. As the comparison algorithms require the storage of an $n \times n$ weight matrix, we expect those methods to scale quadratically in memory while KaczRank scales only linearly.  This is, of course, not surprising, since Kaczmarz methods are used precisely because of their low memory costs.

We remark that we do not provide comparisons of CautiousRank to methods on inconsistent data. Because CautiousRank, unlike the methods we are comparing against, is not designed to converge with data generated with the BTL model, it is difficult to design an experiment with inconsistent data without implicitly favoring one of the two generative frameworks. We do expect CautiousRank to provide the same improved local memory requirements as KaczRank, though its computational time is significantly slower because the iterate ranking must be re-computed at each step. This problem may be solved by computing only an approximate comparison between the rankings of successive iterates in line 5 of \cref{alg:cautiousrank}; this is beyond the scope of our analysis but may be interesting future work. Our goal with these comparisons is not to demonstrate that KaczRank and related methods are immediate improvements on existing algorithms, but rather to demonstrate the efficacy of Kaczmarz-type methods in limited-memory settings.

\section{Conclusion}
\label{sec:conclusion}
We have analyzed several variants of stochastic gradient descent methods applied to data stemming from pairwise comparisons of a finite set of objects. Assuming some true underlying ranking, we identify mathematically and empirically when such methods converge to a feasible point that reveals the underlying ranking, or an approximation to the ranking.  We believe this is a first step toward further understanding when such iterative methods can be applied to discrete mathematical problems.

\bibliographystyle{spmpsci}      % mathematics and physical sciences
\bibliography{main}   % name your BibTeX data base

\end{document}